\numberwithin{equation}{section}
\newtheorem{theorem}{Theorem}[section]
\newtheorem{corollary}[theorem]{Corollary}
\newtheorem{lemma}[theorem]{Lemma}
\newtheorem{proposition}[theorem]{Proposition}
\theoremstyle{definition}
\newtheorem{definition}[theorem]{Definition}
\newtheorem{remark}[theorem]{Remark}
\newtheorem{example}[theorem]{Example}
\newcommand{\rmd}{\textnormal{d}}
\DeclareMathOperator{\Vect}{Vect}
\DeclareMathOperator{\Rank}{Rank}
\DeclareMathOperator{\Span}{Span}
\DeclareMathOperator{\ad}{ad}
\DeclareMathOperator{\Image}{Im}
\DeclareMathOperator{\tr}{tr}
\DeclareMathOperator{\diag}{diag}
\newcommand{\catname}[1]{\textnormal{\texttt{#1}}}
\font\black=cmbx10 \font\sblack=cmbx7 \font\ssblack=cmbx5 \font\blackital=cmmib10  \skewchar\blackital='177
\font\sblackital=cmmib7 \skewchar\sblackital='177 \font\ssblackital=cmmib5 \skewchar\ssblackital='177
\font\sanss=cmss10 \font\ssanss=cmss8 
\font\sssanss=cmss8 scaled 600 \font\blackboard=msbm10 \font\sblackboard=msbm7 \font\ssblackboard=msbm5
\font\caligr=eusm10 \font\scaligr=eusm7 \font\sscaligr=eusm5  \font\fraktur=eufm10
\font\sfraktur=eufm7 \font\ssfraktur=eufm5 
\font\bsymb=cmsy10 scaled\magstep2
\def\all#1{\setbox0=\hbox{\lower1.5pt\hbox{\bsymb
       \char"38}}\setbox1=\hbox{$_{#1}$} \box0\lower2pt\box1\;}
\def\exi#1{\setbox0=\hbox{\lower1.5pt\hbox{\bsymb \char"39}}
       \setbox1=\hbox{$_{#1}$} \box0\lower2pt\box1\;}
\def\tx#1{{\fam0\relax#1}}
\def\sss#1{{\fam\ssfam\relax#1}}
\def\hpb#1{\setbox0=\hbox{${#1}$}
    \copy0 \kern-\wd0 \kern.2pt \box0}
\def\vpb#1{\setbox0=\hbox{${#1}$}
    \copy0 \kern-\wd0 \raise.08pt \box0}
\def\pmb#1{\setbox0\hbox{${#1}$} \copy0 \kern-\wd0 \kern.2pt \box0}
\def\pmbb#1{\setbox0\hbox{${#1}$} \copy0 \kern-\wd0
      \kern.2pt \copy0 \kern-\wd0 \kern.2pt \box0}
\def\pmbbb#1{\setbox0\hbox{${#1}$} \copy0 \kern-\wd0
      \kern.2pt \copy0 \kern-\wd0 \kern.2pt
    \copy0 \kern-\wd0 \kern.2pt \box0}
\def\pmxb#1{\setbox0\hbox{${#1}$} \copy0 \kern-\wd0
      \kern.2pt \copy0 \kern-\wd0 \kern.2pt
      \copy0 \kern-\wd0 \kern.2pt \copy0 \kern-\wd0 \kern.2pt \box0}
\def\pmxbb#1{\setbox0\hbox{${#1}$} \copy0 \kern-\wd0 \kern.2pt
      \copy0 \kern-\wd0 \kern.2pt
      \copy0 \kern-\wd0 \kern.2pt \copy0 \kern-\wd0 \kern.2pt
      \copy0 \kern-\wd0 \kern.2pt \box0}
\mathchardef\za="710B  
\mathchardef\zb="710C  
\mathchardef\zg="710D  
\mathchardef\zd="710E  
\mathchardef\zve="710F 
\mathchardef\zz="7110  
\mathchardef\zh="7111  
\mathchardef\zvy="7112 
\mathchardef\zi="7113  
\mathchardef\zk="7114  
\mathchardef\zl="7115  
\mathchardef\zm="7116  
\mathchardef\zn="7117  
\mathchardef\zx="7118  
\mathchardef\zp="7119  
\mathchardef\zr="711A  
\mathchardef\zs="711B  
\mathchardef\zt="711C  
\mathchardef\zu="711D  
\mathchardef\zvf="711E 
\mathchardef\zq="711F  
\mathchardef\zc="7120  
\mathchardef\zw="7121  
\mathchardef\ze="7122  
\mathchardef\zy="7123  
\mathchardef\zf="7124  
\mathchardef\zvr="7125 
\mathchardef\zvs="7126 
\mathchardef\zf="7127  
\mathchardef\zG="7000  
\mathchardef\zD="7001  
\mathchardef\zY="7002  
\mathchardef\zL="7003  
\mathchardef\zX="7004  
\mathchardef\zP="7005  
\mathchardef\zS="7006  
\mathchardef\zU="7007  
\mathchardef\zF="7008  
\mathchardef\zW="700A  
\mathchardef\zC="7009  
\newcommand{\be}{\begin{equation}}
\newcommand{\ee}{\end{equation}}
\newcommand{\bea}{\begin{eqnarray}}
\newcommand{\eea}{\end{eqnarray}}
\def\*{{\textstyle *}}
\newcommand{\R}{{\mathbb R}}
\newcommand{\s}{{\textstyle *}}
\def\Sec{\sss{Sec}}
\def\Vect{\sss{Vect}}
\def\sT{{\sss T}}
\def\sF{{\sss F}}
\def\xi{\tx{i}}
\def\s*{{\scriptstyle *}}
\def\cO{\mathcal{O}}
\newcommand{\beas}{\begin{eqnarray*}}
\newcommand{\eeas}{\end{eqnarray*}}
\title{Carrollian Lie Algebroids: Taming Singular Carrollian Geometries}
\author{Andrew James Bruce}   
   \email{andrewjamesbruce@googlemail.com}
   \date{\today}
\begin{document}
\begin{abstract}
Developments in Carrollian gravity and holography necessitate the use of singular Carroll vector fields, a feature that cannot be accommodated within standard Carrollian geometry. We introduce  Carrollian Lie algebroids as a framework to study such singular Carrollian geometries. In this approach, we define the Carroll distribution as the image of the kernel of the degenerate metric under the anchor map, i.e., the map from the Lie algebroid to the tangent bundle of the manifold. The Carroll distribution is, in general, a singular Stefan--Sussmann distribution that will fluctuate between rank-1 and rank-0, and so captures the notion of a singular Carroll vector field. As an example, we show that an invariant Carrollian structure on a principal bundle leads to a Carrollian structure on the associated Atiyah algebroid that will, in general, have a singular Carroll distribution. Mixed null-spacelike hypersurfaces, under some simplifying assumptions, also lead to examples of Carrollian Lie algebroids.  Furthermore, we establish the existence of compatible connections on  Carrollian Lie algebroids, and as a direct consequence, we conclude that Carrollian manifolds can always be equipped with compatible affine connections.  \par
\smallskip\noindent
{\bf Keywords:}{ Carrollian Geometry;~Lie Algebroids;~Connections}\par
\smallskip\noindent
{\bf MSC 2020:}{ 53B05;~53D17;~53Z05;~83D05} 
\end{abstract}

 \maketitle

\setcounter{tocdepth}{2}
 \tableofcontents
 
\begin{flushright}
\emph{It would be so nice if something made sense for a change. }\\
Lewis Carroll, Alice’s Adventures in Wonderland, (1865) 
\end{flushright}

\section{Introduction} 
The search for a consistent quantum theory of gravity has been ongoing for approximately a century. Indeed, Einstein pointed out in 1916 that quantum effects would modify general relativity, and in 1927, Klein proposed that quantum theory would radically change the nature of spacetime.  Despite a long history, see Rovelli \cite{Rovelli:2002} for an overview, there is currently no fully-fledged theory of quantum gravity. Over recent decades, there has been an increasing interest in non-Lorentzian geometries (see Figueroa-O’Farrill \cite{Figueroa-O’Farrill:2022}, for example), largely inspired by their potential to give insight into quantum gravity. Carrollian geometry\footnote{Jean-Marc Lévy-Leblond \cite{Lévy-Leblond:1965} coined the term Carrollian because in the limit $c \mapsto 0$, massive particles move at infinite speed while remaining stationary. This is somewhat similar to the situation described in the dialogue between the Red Queen and Alice in Lewis Carroll’s \emph{Through the Looking Glass}. Also see \cite{Lévy-Leblond:2023}.}, which can be phrased  in terms of the Carrollian limit $c \rightarrow 0$ of Lorentzian geometries, has become a significant area of research within theoretical physics; the mathematical literature is far less developed.  It is quite remarkable that geometries in which the light cones become lines in the time direction have much to do with physics at all.  In particular, two events at spatially distinct points are causally disconnected, and the traditional notions of past and future events become severely limited.  Carrollian structures offer a novel perspective on limiting situations in general relativity and possibly offer a simplified window into quantum gravity.  Furthermore,  Carrollian holography posits a duality between a (non-Carrollian) gravitational theory in a $(d+1)$-dimensional asymptotically flat spacetime and a Carrollian field theory on its $d$-dimensional null boundary  (see \cite{Alday:2025,Donnay:2022,Donnay:2023,Lipstein:2025} and references there in).  Fontanella  \&  Payne \cite{Fontanella:2025} argue that the correct bulk theory here might in fact be a Carrollian string theory.  Carrollian physics may also offer insight into dark energy and inflation (see de Boer et al. \cite{deBoer:2022}). We also point out that Carrollian dynamics has found applications in hydrodynamics and condensed matter physics (see \cite[Part III]{Bagchi:2025} for an overview). \par 
The initial papers on the Carrollian group, understood as the $c\rightarrow 0$ limit of the Poincaré group, were published by Lévy-Leblond \cite{Lévy-Leblond:1965} and Sen Gupta \cite{SenGupta:1966} in the mid-1960s. Heanneaux \cite{Heanneaux:1979} was the first to consider intrinsic Carrollian geometry. Duval et al. (see \cite{Duval:2014,Duval:2014a,Duval:2014b}) extended this to intrinsic conformal Carrollian geometry, which has sparked recent interest in Carrollian physics.  Recall that a Carrollian manifold is a manifold equipped with a degenerate metric whose kernel is spanned by a nowhere vanishing complete vector field. For a review of many of the facets of Carrollian physics, the reader may consult Bagchi et al. \cite{Bagchi:2025}. Natural examples of Carrollian manifolds include null hypersurfaces, such as punctured future or past light-cones in Minkowski spacetime, and the event horizon of a Schwarzschild black hole.\par 
Ecker et al. \cite{Ecker:2023}, study the analogue of black holes in Carrollian gravity (specifically, within $2d$ Carroll–Jackiw–Teitelboim gravity). As there is no light-cone structure in Carrollian gravity, the notion of an event horizon becomes meaningless. Instead, one must consider points where the Carroll field either diverges or vanishes  (see \cite[Section 2.3.4]{Ecker:2023}). The necessity of a singular Carroll vector field in gravitational physics also appears in Donnay et al. \cite{Donnay:2022}. Such singular vector fields, i.e., Carroll vector fields that have a non-empty locus where they vanish, are not included in the standard definition of a Carrollian manifold.  In particular, Ecker et al. pose the question of developing a mathematical framework for handling singular Carroll vector fields while the rest of the geometry remains intact. That is, is there a consistent framework in which the basic tenets of Carrollian geometry remain unchanged, while the Carroll vector field may be singular? These singular points would, at least in part, represent a species of Carrollian singularities, a notion that is under development.  We also point out that divergent Carroll vector fields, i.e., one or more components of the vector field become infinite, also feature as Carrollian singularities; however, we will not address this situation here.\par 
The solution we give is to work with \emph{Carrollian Lie algebroids}, a notion we will define shortly. Recall that a Lie algebroid\footnote{We remark that Lie algebroids have been applied to various aspects of gravitational physics, examples include \cite{Anastasiei:2014,Barnich:2017,Blohmann:2013,Bruce:2016,Fabi:2014,Strobl:2004,Vacaru:2012}.} is a generalisation of the tangent bundle of a manifold and a Lie algebra. Lie algebroids provide a broader setting for formulating differential geometry, and crucially for this paper, they are vital in the study of singular distributions.  A Carrollian structure on a Lie algebroid $\pi :A\rightarrow M$ is the direct analogue of a Carrollian structure on a manifold. That is, we have a degenerate metric $g$ on a Lie algebroid $A$ and a rank-1 Lie subalgebroid $L \subset A$, such that $\ker(g) = \Sec(L)$. The potentially singular nature of the geometry is encoded in the anchor map $\rho : A \rightarrow \sT M$. Specifically, we define the \emph{Carroll distribution} $\mathcal{C} := \rho(L) \subset \sT M$, which is, in general, a singular Stefan--Sussmann distribution as it may fluctuate between rank-1 and rank-0.  Note that the rank of the kernel of the degenerate metric remains constant. We interpret the Carroll distribution as defining a null direction at the non-singular points on the base manifold. At the singular points, the null direction `collapses' and the Carrollian nature of the base manifold is lost.  We comment that standard Carrollian manifolds are examples of Carrollian Lie algebroids, demonstrating that our framework is a generalisation of the standard theory. As a physically relevant example, we construct, under some simplifying assumptions, Carrollian Lie algebroids associated with mixed null-spacelike hypersurfaces in a Lorentzian manifold. \par 
The potential downside of this formalism is the abstraction; the fundamental structure is defined on a Lie algebroid over the manifold under study rather than on the manifold itself. This makes the physical interpretation of the degenerate metric and the rank-1 Lie subalgebroid unclear. We liken the situation to the BV--BRST formalism, where the space of fields is expanded to include ghosts and anti-fields, ensuring that the theory is mathematically consistent. Nonetheless, Carrollian Lie algebroids provide a rigorous  mathematical structure to understand singular Carrollian geometries. \par
Alongside establishing the foundations of Carrollian Lie algebroids, we establish the existence of Lie algebroid connections that are compatible with the Carrollian structure. A direct consequence of the general existence results is that compatible connections can always be constructed on any (weak) Carrollian manifold.  Thus, we have the foundational aspects of singular Carrollian geometry in place to further explore Carrollian physics. For example, Carrollian Lie algebroids and their compatible connections may be  useful in further studies of extreme gravitational phenomena in the Carrollian limit and Carrollian holography.\par 
To aid the reader less familiar with the formalism of Lie algebroids, we provide a dictionary translating the standard constructions in Carrollian geometry to the setting of Carrollian Lie algebroids.

\medskip
\renewcommand{\arraystretch}{1.5}
\noindent \begin{tabular}[h]{ |p{230pt}||p{230pt} | }
 \hline
 \multicolumn{2}{|c|}{\textbf{Dictionary}} \\
 \hline
 Carrollian Lie Algebroids & Carrollian Manifolds \\
 \hline
 Vector bundle $A$ over a manifold $M$  & Tangent bundle $\sT M$ of a manifold $M$   \\
 Lie bracket of sections $[u,v]$ & Lie bracket of vector fields $[X,Y]$ \\
 Anchor map $\rho : A \rightarrow \sT M$ & Identity map $\rho: \sT M \rightarrow \sT M$\\
Leibniz rule $[u, fv] = \rho_u(f)\, v + f\,[u,v]$   & Leibniz rule $[X, fY]= X(f)Y + f\, [X,Y]$\\
 Metric $g : \Sec(A) \times \Sec(A) \rightarrow C^\infty(M)$&  Metric $g :\Vect(M)\times \Vect(M) \rightarrow C^\infty(M)$   \\
 Trivial line subbundle $L := \ker(g)\subset A$& Trivial line subbundle $L := \ker(g) \subset \sT M$  \\
 Nowhere vanishing section $\sigma \in \Sec(L)$& Carroll vector field $\kappa \in \Sec(L)$ \\
 Possibly singular Carroll distribution\newline  $\mathcal{C} := \rho(L) \subset \sT M $& Regular  distribution\newline  $\mathcal{C} := L \subset \sT M $\\
 Possibly singular Carroll foliation \newline i.e, the dimension of each  leaf may be  $0$ or $1$  & Regular Carrollian foliation\newline i.e, the dimension of each leaf is $1$ \\
 \hline
\end{tabular}

\medskip

\noindent \textbf{Mathematical Motivation.}  The \emph{Lie algebroid mantra} states that whatever works on a tangent bundle works on a Lie algebroid.  The point of view here is that Lie algebroids are a mixture of a tangent bundle and a Lie algebra; both are limiting cases of a Lie algebroid, and they offer a wider framework than tangent bundles and related tensor bundles to formulate concepts in differential geometry. For example, Riemannian Lie algebroids have a long history (see \cite{Boucetta:2011} and references therein).   In particular, the core tenets of Riemannian geometry generalise to the setting of Lie algebroids directly. The case of degenerate metrics on Lie algebroids is far less studied, though we remark that they appear in the context of information geometry on Lie groupoids (see Grabowska et al. \cite{Grabowska:2020}). Thus, it is  natural  from the perspective of abstract differential geometry to investigate the core properties of Carrollian Lie algebroids as part of a wider effort to understand degenerate metrics on Lie algebroids.
\medskip

\noindent \textbf{Arrangement.} The bulk of this paper is in Section \ref{sec:CarrollStructures}. We recall the notion of a Lie algebroid as well as degenerate metrics and connections on them in Subsection \ref{subsec:LieAlgebroids}. In Subsection \ref{subsec:CarrollianLieAlgebroids}, we define Carrollian Lie algebroids (Definition \ref{def:CarrollianLieAlgebroid}) and make an initial study of them. A list of foundational examples of Carrollian Lie algebroids is given in Subsection \ref{subsec:Examples}. In Subsection \ref{subsec:MixHypSur}, we show that, under some simplifying assumptions, that mixed null-spacelike hypersurfaces can be described using Carrollian Lie algebroids.  Compatible connections on a Carrollian Lie algebroid are the subject of Subsection \ref{subsec:Connections}. We end on Section \ref{sec:ConRem} with concluding remarks and suggestions for future directions.

\medskip 
\noindent \textbf{Conventions.} All manifolds will be finite-dimensional, second-countable, Hausdorff, and smooth.  We will almost exclusively work in the global coordinate-free approach to differential geometry. 
%
%
\section{Carrollian Structures on Lie algebroids}\label{sec:CarrollStructures}
\subsection{Recollection of Lie algebroids}\label{subsec:LieAlgebroids}
We assume that the reader has a grasp of the fundamental theory of Lie algebroids. Our general reference for Lie algebroids is the book by Mackenzie \cite{Mackenzie:2005}.  Lie algebroids appear in diverse contexts, including  singular distributions, Poisson geometry, geometric mechanics, control theory, and gauge theory.  Our main point of view is that Lie algebroids are  ``generalised tangent bundles''. For completeness, we will give the definition of a Lie algebroid.
\begin{definition}[Pradines \cite{Pradines:1974}]\label{def:LieAlg}
A \emph{Lie algebroid} is a triple $(A, [-,-], \rho)$, where $\pi : A \rightarrow M$ is a vector bundle, $[-,-]$ is a Lie bracket on the vector space $\Sec(A)$, referred to as the \emph{Lie algebroid bracket}, and  a linear map $\rho: \Sec(A) \rightarrow \Vect(M)$, referred to as the \emph{anchor}, that satisfies the Leibniz rule
$$[u,f\,v] = \rho_u(f) \, v  +  \, f \, [u,v],$$
with $u$ and $v \in \Sec(A)$ and $f \in C^\infty(M)$. 
\end{definition}
Note that the anchor is a homomorphism of Lie algebras, i.e., $\rho_{[u,v]} = [\rho_u, \rho_v]$. We will also refer to the map $\rho : A \rightarrow \sT M$ as the anchor; the context will make it clear.\par 
By a \emph{distribution}, we mean the assignment of a vector subspace of the tangent space at each point on a manifold. A distribution is \emph{regular} if the dimension of each assigned vector subspace is all equal; otherwise, we have a \emph{singular} or  \emph{Stefan--Sussmann distribution}. A distribution is said to be \emph{integrable} if it is generated by a family of vector fields, and is invariant under the flow of every vector field generating the distribution.  An integrable distribution leads to foliation of the manifold, i.e., a  partition of the manifold into submanifolds. Note that for Stefan--Sussmann distributions, these submanifolds will have a range of dimensions.\par
The image of the anchor $\Image(\rho) \subseteq \sT M$ is, in general, an integrable Stefan--Sussmann distribution.  A Lie algebroid is said to be a \emph{regular Lie algebroid} if  the anchor map $\rho : A\rightarrow \sT M$ is of constant rank. In this case, the image of the anchor is an integrable regular distribution. A Lie algebroid is said to be \emph{transitive} if the anchor map is a surjection, i.e., is onto.  It can easily be shown that the Lie bracket of any Lie algebroid is local, and that a Lie algebroid can be restricted to a Lie algebroid over any open subset of the base manifold.
\begin{example}
Some fundamental examples of Lie algebroids include 
\begin{enumerate}[itemsep=0.5em]
\item the tangent bundle of a manifold $\sT M \rightarrow M$ equipped with the Lie bracket of vector fields, and the identity map as the anchor; 
\item any integrable distribution $\mathcal{D}\subset \sT M$ where the bracket and anchor are inherited from the tangent bundle;
\item any vector bundle $E \rightarrow M$ equipped with the zero bracket and anchor; 
\item bundles of Lie algebras, where the Lie bracket is defined point-wise and the anchor is the zero map. Specifically, Lie algebras are Lie algebroids over a single point;
\item the cotangent bundle $\sT^*M \rightarrow M$ of a Poisson manifold $(M, \Lambda)$ where the anchor is the contraction of one-form $\Sec(\sT^* M)\ni \alpha \mapsto \Lambda(\alpha, - ) \in \Vect(M)$, and  the Lie bracket is  $[\alpha, \beta] = \mathcal{L}_{\Lambda(\alpha ,-)}\beta  - \mathcal{L}_{\Lambda(\beta ,-)}\alpha - \rmd\big(\Lambda(\alpha, \beta)\big)  $.
\end{enumerate}
For further examples, the reader may consult Mackenzie \cite{Mackenzie:2005}.
\end{example}
Metrics are, of course, vital in differential geometry and physics; the notion directly generalises to Lie algebroids. A non-degenerate metric on a Lie algebroid is just a fibrewise inner product on the underlying vector bundle; there are no extra compatibility conditions. We thus make the following definition.
\begin{definition}
A (possibly degenerate) \emph{metric $g$ on a Lie algebroid} $(A, [-,-], \rho)$ is a symmetric $C^\infty(M)$-bilinear  map
$$g : \Sec(A) \times \Sec(A) \longrightarrow C^\infty(M)\,.$$
A metric on a Lie algebroid is said to be \emph{non-degenerate} if the kernel,
$$\ker(g) := \{u \in \Sec(E)~~|~~ g(u,v) =0, ~\textnormal{for all}~v\in \Sec(E) \}\,,$$
is trivial, i.e., consists of just the zero section, and is said to be \emph{degenerate} otherwise. A degenerate metric $g$ is said to be a \emph{regular metric} if $\ker(g)$ is of constant rank.
\end{definition}
A metric on a Lie algebroid provides each fibre with a semi-inner product, i.e., an ``inner product'' that may be degenerate. Hence, we will refer to a Lie algebroid equipped with a metric as a \emph{semi-Riemannian Lie algebroid}.  Moreover, associated with a metric is the \emph{flat map}
\begin{align}
    \flat : ~ & \Sec(A) \longrightarrow \Sec(A^*)\\
    \nonumber & u \mapsto u^\flat := g(u,-)\,,
\end{align}
which is an isomorphism only in the case that $g$ is non-degenerate.
\begin{example}
A manifold  equipped with a (possibly degenerate) metric $(M,g)$ gives rise to a semi-Riemannian Lie algebroid structure on $\sT M$.  As a specific example,  consider $M = \R^4$ equipped with Cartesian coordinates $(w,x, y , z )$, and the metric given as matrix
$$g = \begin{pmatrix}
    0 & 0 & 0 & 0 \\
    0 & 0 & 0& 0 \\
    0& 0 & 1& 0 \\
    0 & 0 & 0& 1 
  \end{pmatrix}\,,$$
  with respect to the coordinate basis $(\partial_w, \partial_x, \partial_y, \partial_z)$. Observe that $\det(g) =0$, and $g$ is a degenerate metric. Clearly, $\ker(g) = \Span \big\{  \partial_w, \partial_x\big\}$, and thus this example is a regular degenerate metric. 
\end{example}
\begin{example}
Quadratic Lie algebras are Lie algebras equipped with a non-degenerate symmetric bilinear form that is invariant under the adjoint action. A little more explicitly $g : \mathfrak{g} \otimes \mathfrak{g} \rightarrow \R$  can be viewed as a metric, and we further have the invariant condition $g([u,v],w) + g(v, [u,w]) =0$, for all $u,v,w \in \mathfrak{g}$. Semisimple Lie algebras are canonical examples - the Killing form provides the required metric. Thus, quadratic Lie algebras form a class of Riemannian Lie algebroids. 
\end{example}
\begin{example}
Any Lie algebroid $(A, [-,-], \rho)$ can be equipped with the zero metric, which is defined as $g(u,v) := 0$ for all $u,v \in \Sec(A)$.  As $\ker(g) = \Sec(A)$, we have a regular degenerate metric whose kernel is of maximal rank.
\end{example}
The \emph{Lie derivative of $g$} with respect to a section $u \in \Sec(A)$ is defined via the Leibniz rule with respect to contraction, and  is thus given by
\begin{equation}
\big(\mathcal{L}_u g\big)(v,w) := \rho_u(g(v,w)) - g ([u,v], w) - g(v,[u,w])\,,
\end{equation}
for all $v,w \in \Sec(A)$. It can be shown that $\mathcal{L}_{[u,v]} = [\mathcal{L}_u, \mathcal{L}_v]$.
\begin{definition}
Let $(A, [-,-] , \rho, g)$ be a semi-Riemannian Lie algebroid. A section $u \in \Sec(A)$ is said to be a \emph{Killing section} if $\mathcal{L}_u g =0$.  \end{definition}
Clearly, the set of Killing sections of a semi-Riemannian Lie algebroid forms a Lie subalgebra of $\big(\Sec(A), [-,-]\big)$. We remark that the Lie algebra of Killing sections will, in general, be infinite-dimensional when the metric is degenerate.  For a degenerate metric, the inverse does not exist, which implies that the number of independent Killing equations is reduced.  In particular, the components of a Killing section in a `null direction' are unconstrained and can be arbitrary smooth functions. Thus, the Lie algebra of Killing sections will be infinite. \par 
Connections, in their various guises, are widespread throughout differential geometry and mathematical physics. The notion of a connection on a Lie algebroid as a generalisation of a Koszul connection,  i.e., a covariant derivative of sections of a vector bundle, is direct. Importantly, we also have the notions of curvature and torsion tensors. 
\begin{definition}
 Let $(A, [-,-], \rho)$ be a Lie algebroid. A \emph{Lie algebroid connection} is an $\R$-bilinear map that satisfies 
 \begin{enumerate}[itemsep=0.5em]
\item $ \nabla_{f u}v = f \, \nabla_u v$; and
\item $\nabla_u (fv) = f \, \nabla_u v + \rho_u(f) v\,,$
\end{enumerate}
for all $u,v \in \Sec(A)$ and $f \in C^\infty(M)$. The \emph{curvature tensor} and \emph{torsion tensor} of a Lie algebroid connection are defined, respectively, as
\begin{align*}
    R(u,v)w := [\nabla_u, \nabla_v]w - \nabla_{[u,v]} w\,, &&\textnormal{and}&& T(u,v) := \nabla_u v - \nabla_v u  - [u,v]\,, 
\end{align*}
for all $u,v, w \in \Sec(A)$.
\end{definition}
Due to the skewsymmetry of the Lie algebroid bracket, it is clear that 
$$R(u,v) = - R(v,u)\,, \qquad \textnormal{and}\qquad  T(u,v) = - T(u,v)\,.$$
Via following the standard arguments, it can be shown that any Lie algebroid connection satisfies the \emph{algebraic and differential Bianchi identities}
\begin{subequations}
\begin{align}
   & \sum_{\mathrm{cyclic}(u,v,w)} \left( R(u,v)w - (\nabla_u T)(v,w) - T\big(T(u,v)w \big)\right) =0\,,\\ 
   & \sum_{\mathrm{cyclic}(u,v,w)} \left( (\nabla_u R)(v,w) + R\big( T(u,v),w \big)\right)  = 0\,,
\end{align}
\end{subequations}
for all $u,v,w \in \Sec(A)$.\par 
For any pair of sections $u,v \in \Sec(L)$, we have the $C^\infty(M)$-linear map defined by the curvature tensor of a Lie algebroid connection
\begin{equation}\label{eqn:RLinOp}
R(u, v) : \Sec(A) \longrightarrow \Sec(A)\,.
\end{equation}
Via modification of the standard arguments for the existence of affine connections, the existence of Lie algebroid connections can be established, see  Křižka \cite{Křižka:2008}, for example.
\begin{example}
A Lie algebroid connection on the tangent bundle  $\sT M \rightarrow M$  is an affine connection on $M$.  The curvature and torsion tensors are the standard tensors in differential geometry. If $(M,g)$ is a (pseudo-)Riemannian manifold, then the affine connection may be chosen to be the Levi–Civita connection.
\end{example}
\begin{example}
 Any vector bundle $\pi :A \rightarrow M$ equipped with the zero anchor and zero Lie bracket is a Lie algebroid. In this case, a Lie algebroid connection is a $C^\infty(M)$-bilinear map
 $$\nabla : \Sec(A) \times \Sec(A) \rightarrow \Sec(A)\,.$$
 The curvature and torsion tensors simplify to
 $$R(u,v)w := [\nabla_u, \nabla_v]w \,, \qquad  T(u,v) := \nabla_u v - \nabla_v u $$\,.
\end{example}
\begin{example}
A connection on a (real) Lie algebra $(\mathfrak{g}, [-,-])$  is an $\R$-bilinear map $\nabla : \mathfrak{g} \times \mathfrak{g} \rightarrow \mathfrak{g}$.    
\end{example}
\begin{definition}
A connection on a semi-Riemannian Lie algebroid $(A, [-,-], \rho, g)$ is said to be \emph{metric compatible} if $\nabla g =0$, i.e.,
$$\rho_u\big( g(v,w)\big) = g(\nabla_u v, w) + g(v, \nabla_u w)\,,$$
for all $u,v,w \in \Sec(A)$.
\end{definition}
\begin{example}
If $(M,g)$ is a semi-Riemannian manifold, then $\sT M$ is a semi-Riemannian Lie algebroid with the anchor being the identity map, and the Lie bracket is the standard Lie bracket of vector fields. In this case, the condition for an affine connection to be metric compatible is precisely the standard notion, i.e., 
$$X \big(g(Y,Z) \big) = g(\nabla_X Y, Z) + g(Y, \nabla_X Z)\,,$$
for all vector fields $X,Y$ and $Z \in \Vect(M)$.
\end{example}
The \emph{Fundamental Theorem of Riemannian Lie Algebroids} states there is a unique connection on a Riemannian Lie algebroid  characterised by the two properties that it has vanishing torsion and is metric compatible (see, for example, Boucetta \cite{Boucetta:2011}). Such connections are known as Levi–Civita connections. This is, of course, the generalisation of the classical theorem of Riemannian manifolds. It should be noted that this theorem does not hold if the metric is degenerate. We will address this carefully for the specific case of Carrollian Lie algebroids in Subsection \ref{subsec:Connections}. 
%
%
\subsection{Carrollian Lie Algebroids}\label{subsec:CarrollianLieAlgebroids}
Recall the definition of a weak Carrollian manifold.
\begin{definition}[Duval et al. \cite{Duval:2014}]
A \emph{weak Carrollian manifold} is a triple $(M, g, \kappa)$, where $M$ is a smooth manifold, $g$ is a degenerate metric whose pointwise diagonalisation is $\diag(1,1,\cdots, 1,0)$, $\kappa$ is a nowhere vanishing complete vector field, called the \emph{Carroll vector field}, that satisfy the compatibility condition  $\ker(g) = \Span\{\kappa\}$.
\end{definition}
 Recall that a complete vector field is a vector field for which every integral curve exists for all `time', i.e., the parameter describing the curve is defined for all $t \in (- \infty, + \infty)$. It is possible to relax the condition on the signature of the degenerate metric and  consider \emph{pseudo-Carrollian manifolds} (see Gibbons \cite{Gibbons:2019}). We will change focus slightly and concentrate on the line bundle associated with the Carroll vector field $\kappa$, and relax the diagonalisation requirement, and make the following definition.
\begin{definition}\label{def:CarrollianLieAlgebroid}
A \emph{Carrollian Lie algebroid} is a quintuple $(A, [-,-], \rho, g, L)$, where $(A, [-,-], \rho)$ is a Lie algebroid over $M$, equipped with a degenerate metric $g$, and $L$ is a trivial line bundle over $M$, such that
\begin{enumerate}[itemsep=0.5em]
\item $L \subset A$ as vector bundles; and
\item $ \ker(g) := \{u \in \Sec(A) ~~|~~ g(u,-) =0 \} = \Sec(L)$.
\end{enumerate}
\end{definition}
\begin{remark}
In the definition of a Carrollian Lie algebroid, we take the kernel of the metric to be a trivial line bundle and so admits a nowhere vanishing section; this implies that, once $A$ is oriented, the Euler class $e(A)$ vanishes. This is consistent with the standard notion of a (weak) Carrollian manifold where the Carroll vector field is nowhere vanishing, i.e., the Carroll vector field provides a frame for a trivial line bundle. The physical reasoning is that the Carroll vector field locally provides a `null direction' and for this to be well-defined everywhere, the vector field must be non-vanishing. However, mathematically, we can relax the triviality condition on the line bundle; this will result in minor changes to the following statements and their proofs. 
\end{remark}
\begin{proposition}\label{prop:LAlgebroid}
Let $(A, [-,-], \rho, g, L)$ be a Carrollian Lie algebroid. The trivial line bundle $\tau : L \rightarrow M$ is a Lie subalgebroid of $A$.
\end{proposition}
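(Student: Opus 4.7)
The plan is to exploit the rank-one nature of $L$: on a line bundle the Lie bracket of two sections is forced to be proportional to a chosen frame, because the bracket of any section with itself vanishes by skew-symmetry. First I would fix a global nowhere vanishing section $\sigma \in \Sec(L)$, which exists because $L$ is assumed to be a trivial line bundle. Then every pair of sections $u, v \in \Sec(L)$ may be written as $u = f \sigma$ and $v = h \sigma$ for unique $f,h \in C^\infty(M)$.

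Next, I would apply the Leibniz rule of the Lie algebroid $(A, [-,-], \rho)$ in both slots (using skew-symmetry in the first slot) to compute
\begin{equation*}
[f \sigma, h \sigma] \;=\; f \, [\sigma, h \sigma] - \rho_{h\sigma}(f)\, \sigma \;=\; f\rho_{\sigma}(h)\, \sigma + f h \, [\sigma,\sigma] - h\rho_{\sigma}(f)\, \sigma .
\end{equation*}
Since $[\sigma,\sigma] = 0$ by skew-symmetry, this simplifies to $[u,v] = \bigl(f\rho_{\sigma}(h) - h\rho_{\sigma}(f)\bigr)\sigma$, which is manifestly a section of $L$. Hence $\Sec(L)$ is closed under the Lie algebroid bracket of $A$, and the restriction $[-,-]|_L$ defines an $\R$-bilinear, skew-symmetric, Jacobi bracket on $\Sec(L)$ (the Jacobi identity being inherited from $A$).

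Finally, I would equip $L$ with the restricted anchor $\rho|_L : L \rightarrow \sT M$. The Leibniz rule for $(L, [-,-]|_L, \rho|_L)$ is inherited directly from the Leibniz rule on $A$, so $L$ is a Lie algebroid, and the inclusion $L \hookrightarrow A$ is a Lie algebroid morphism covering the identity of $M$. By definition, this exhibits $L$ as a Lie subalgebroid of $A$.

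There is no substantial obstacle: the only thing being used beyond the definition of a Carrollian Lie algebroid is that any rank-one subbundle whose module of sections is generated by a single (local) frame is automatically involutive with respect to any skew-symmetric bracket satisfying the Leibniz rule. The triviality of $L$ lets me carry out the computation globally; otherwise one would pick local frames and glue, which would not affect the conclusion.
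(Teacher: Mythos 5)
Your proof is correct and follows essentially the same route as the paper: fix a global frame $\sigma$ of the trivial line bundle $L$, write arbitrary sections as $f\sigma$ and $h\sigma$, and use the Leibniz rule together with $[\sigma,\sigma]=0$ to conclude $[f\sigma,h\sigma]=\bigl(f\rho_\sigma(h)-h\rho_\sigma(f)\bigr)\sigma\in\Sec(L)$. The only difference is that you spell out the routine final step (restricted anchor, inherited Jacobi identity, inclusion as a morphism), which the paper leaves implicit.
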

\begin{proof}
By definition, $L$ is a vector subbundle of $A$.  We need to show that the $\Sec(L)$  is closed under the (restriction of the) Lie bracket of $A$. This is almost immediate as $L$ is rank-$1$. Explicitly, picking a (global) frame $\sigma \in \Sec(L)$, we set $v_1 = f_1 \sigma$ and $v_2 = f_2 \sigma$, where $f_1$ and $f_2$ are smooth functions on $M$. Then, using the Leibniz rule
$$[v_1, v_2] = [f_1 \sigma , f_2 \sigma] = \big(f_1 \rho_\sigma(f_2) - f_2 \rho_\sigma(f_1) \big) \, \sigma + f_1 f_2 \, [\sigma, \sigma]\,.$$
Then due to skew-symmetry we have $[\sigma, \sigma] =0$, and so 
$$[v_1, v_2] = \big(f_1 \rho_\sigma(f_2) - f_2 \rho_\sigma(f_1) \big) \, \sigma \in \Sec(L)\,.$$
\end{proof}
\begin{proposition}
Let $(A, [-,-], \rho, g, L)$ be a Carrollian Lie algebroid, and let $E := A /L$ be the quotient vector bundle. Then the induced metric $g_E$ on $E$ is well-defined and non-degenerate.
\end{proposition}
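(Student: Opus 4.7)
The plan is to define the induced form by the evident formula $g_E([u],[v]) := g(u,v)$, where $[u], [v] \in \Sec(E)$ are obtained from lifts $u, v \in \Sec(A)$, and then verify (i) well-definedness on equivalence classes, (ii) tensoriality and symmetry, and (iii) non-degeneracy. The Lie-algebroid structure plays essentially no role in the argument; the content is purely that of quotienting a symmetric bilinear form by its radical, applied fibrewise (or equivalently at the level of sections, using that the short exact sequence $0 \to L \to A \to E \to 0$ of vector bundles splits smoothly, so every section of $E$ lifts to a section of $A$).

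First I would check well-definedness. If $u' = u + \ell_1$ and $v' = v + \ell_2$ with $\ell_1, \ell_2 \in \Sec(L) = \ker(g)$, then bilinearity gives
\begin{equation*}
g(u', v') = g(u,v) + g(u, \ell_2) + g(\ell_1, v) + g(\ell_1, \ell_2),
\end{equation*}
and each of the last three terms vanishes by the defining property $\Sec(L) = \ker(g)$. Hence $g_E([u],[v])$ depends only on the equivalence classes. Symmetry and $C^\infty(M)$-bilinearity of $g_E$ are then inherited directly from $g$, so $g_E$ is a metric on the Lie algebroid $E$ in the sense of the definition in Subsection \ref{subsec:LieAlgebroids}.

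For non-degeneracy I would reason at the level of sections. Suppose $[u] \in \Sec(E)$ satisfies $g_E([u],[v]) = 0$ for all $[v] \in \Sec(E)$. Given an arbitrary $w \in \Sec(A)$, its class $[w] \in \Sec(E)$ is available as a test element, so $g(u,w) = g_E([u],[w]) = 0$. Since $w$ was arbitrary, this shows $u \in \ker(g) = \Sec(L)$, whence $[u] = 0$ in $\Sec(E)$. Thus the radical of $g_E$ is trivial, which is precisely the non-degeneracy of $g_E$.

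There is no serious obstacle in this proof; the only subtlety worth flagging is the passage between the fibrewise and the section-level formulations of $\ker(g)$. The definition of a Carrollian Lie algebroid fixes the kernel as the set of sections, but because $L$ is assumed to be a (trivial, hence smooth) rank-$1$ subbundle, the pointwise kernel $\ker(g_p)$ coincides with $L_p$ at each $p \in M$; this equivalence is what allows the quotient construction to produce a smooth non-degenerate metric on $E$ rather than a merely sectionally non-degenerate object. Once this is noted, both the well-definedness and the non-degeneracy arguments above can alternatively be phrased fibrewise as the standard linear-algebraic fact that a symmetric bilinear form descends to a non-degenerate form on the quotient by its radical.
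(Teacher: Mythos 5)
Your proposal is correct and follows essentially the same route as the paper: define $g_E$ via lifts, check independence of the lift using $\Sec(L)=\ker(g)$ and bilinearity, and deduce non-degeneracy from the fact that any section annihilated by all test sections must lie in $\ker(g)=\Sec(L)$ and hence project to zero. The additional remark on the fibrewise versus section-level formulation of the kernel is a sensible clarification but does not alter the argument.
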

\begin{proof}
 The metric $g_E$ is constructed as follows. Let us pick two sections $\bar{u}, \bar{v} \in \Sec(E)$ and lift these to sections $u,v \in \Sec(A)$, i.e.,  we have two specified sections $u$ and $v$ such that $\pi(u) = \bar{u}$ and $\pi(v) = \bar{v}$, where $\pi : A \rightarrow E = A/L$  is the projection. We then define $g_E(\bar{u}, \bar{v}) := g(u,v)$. To show that this is well-defined, we need to demonstrate that the definition does not depend on the chosen lifts. With this in mind, we consider $u' := u +\sigma_1$ and $v' := v + \sigma_2$ for arbitrary $\sigma_1, \sigma_2 \in \Sec(L)$.  We need to show that $g(u', v') = g(u,v)$ for the induced metric to be well-defined. Explicitly, 
 $$g(u', v') = g(u + \sigma_1 , v + \sigma_2) = g(u,v) + g(u, \sigma_2) + g(\sigma_1, v) + g(\sigma_1, \sigma_2)\,.$$
 As $\sigma_1, \sigma_2 \in \ker(g)$ by definition we have that $g(u', v') = g(u,v)$.\par
 To show non-degeneracy, we need to demonstrate that if $\bar{u} \in \Sec(E)$ is such that $g|_E(\bar{u}, \bar{v}) =0$  for all $\bar{v}\in \Sec(E)$, then $\bar{u}$ must be the zero section.  Let us assume  $g|_E(\bar{u}, \bar{v}) =0$ for all $\bar{v}\in \Sec(E)$. This means that $g(u,v) =0$, for all lifts $v \in \Sec(A)$. However, as the kernel of $g$  is $\Sec(L)$, it must be the case that $u \in \Sec(L)$. As $\bar{u}$ is the projection of $u$, it follows that $\bar{u}$ is the zero section of the vector bundle $E$. Thus, $g_E$ is non-degenerate.
\end{proof}
\begin{remark}
The quotient bundle $E := A/L$ is a vector bundle of rank one less than $A$; however, in general, it is not a Lie subalgebroid of $A$. Thus, in general, $(E, g_E)$ is a Euclidean vector bundle.   Moreover, we have a short exact sequence of vector bundles 
$$0 \rightarrow L \rightarrow A \rightarrow E \rightarrow 0\,.$$
\end{remark}
\begin{proposition}
Let $(A, [-,-], \rho, g, L)$ be a Carrollian Lie algebroid. If any nowhere vanishing section of $L$ is Killing, then all sections of $L$ are Killing.
\end{proposition}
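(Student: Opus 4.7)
The plan is to exploit the triviality of $L$ together with the fact that its sections are in the kernel of $g$. Fix a nowhere vanishing Killing section $\sigma \in \Sec(L)$, which by assumption satisfies $\mathcal{L}_\sigma g = 0$. Since $L$ is a trivial line bundle trivialised by $\sigma$, every section of $L$ is of the form $f\sigma$ for a unique $f \in C^\infty(M)$. It therefore suffices to check that $\mathcal{L}_{f\sigma} g = 0$ for all $f \in C^\infty(M)$.

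The main computation is a direct unpacking of the definition of the Lie derivative, using the Leibniz rule for the Lie algebroid bracket, namely $[f\sigma, v] = f[\sigma, v] - \rho_v(f)\, \sigma$. Applied to arbitrary $v,w \in \Sec(A)$:
\begin{align*}
(\mathcal{L}_{f\sigma} g)(v,w)
&= \rho_{f\sigma}\bigl(g(v,w)\bigr) - g\bigl([f\sigma, v], w\bigr) - g\bigl(v, [f\sigma, w]\bigr) \\
&= f\, \rho_\sigma\bigl(g(v,w)\bigr) - f\, g\bigl([\sigma,v],w\bigr) + \rho_v(f)\, g(\sigma, w) \\
&\quad{} - f\, g\bigl(v,[\sigma,w]\bigr) + \rho_w(f)\, g(v, \sigma) \\
&= f\, (\mathcal{L}_\sigma g)(v,w) + \rho_v(f)\, g(\sigma, w) + \rho_w(f)\, g(v,\sigma).
\end{align*}
The key observation, and the only ``geometric'' input, is that $\sigma \in \Sec(L) = \ker(g)$, so both $g(\sigma, w)$ and $g(v, \sigma)$ vanish identically. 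Combined with $\mathcal{L}_\sigma g = 0$, this yields $(\mathcal{L}_{f\sigma} g)(v,w) = 0$ for all $v, w \in \Sec(A)$, hence $f\sigma$ is Killing.

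There is no real obstacle: the argument is entirely algebraic, hinging only on (i) the $C^\infty(M)$-tensoriality failure of $\mathcal{L}$ in its ``vector'' slot being precisely cancelled by the kernel condition, and (ii) the triviality of $L$, which guarantees that \emph{every} section is a smooth rescaling of the chosen frame $\sigma$. The latter is where the hypothesis that $L$ be a trivial line bundle (rather than merely a rank-$1$ subbundle) enters; in the remark that follows Definition \ref{def:CarrollianLieAlgebroid} the author already notes that relaxing triviality would require only ``minor changes'', and indeed here one would simply run the same local computation with locally defined frames.
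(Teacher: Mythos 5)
Your proof is correct and follows essentially the same route as the paper's: both trivialise $L$ by the frame $\sigma$, expand $[f\sigma,v]$ via the Leibniz rule, and observe that the non-tensorial terms $\rho_v(f)\,g(\sigma,w)$ and $\rho_w(f)\,g(v,\sigma)$ vanish because $\sigma\in\ker(g)$, leaving $f\,(\mathcal{L}_\sigma g)(v,w)=0$. No gaps; the only difference is cosmetic (you package the computation as $\mathcal{L}_{f\sigma}g$ directly, while the paper verifies the Killing identity term by term).
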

\begin{proof}
A nowhere vanishing section $\sigma$ provides a frame for $\Sec(L)$. Let us assume that this section is Killing, i.e., $\mathcal{L}_\sigma g =0$, which is equivalent to
$$\rho_\sigma(g(v,w)) = g([\sigma, v], w)+ g(v, [\sigma, w])\,,$$
for all $v,w\in \Sec(A)$.  Next, consider an arbitrary section $f \sigma$ of $L$ written in this frame, and let us check the Killing condition.  Explicitly using the Leibniz rule and the bi-linearity of the degenerate metric, we have
\begin{align*}
g([f\sigma, v], w)+ g(v, [f\sigma, w]) &= g(-\rho_v(f)\sigma +  f [\sigma, v],w) + g(v, - \rho_w(f)\sigma + f[\sigma, w])\\
&= f \big( g([\sigma, v], w) + g(v, [\sigma, w])\big) - \big( \rho_v(f)g(\sigma, w)+ \rho_w(f)g(v,\sigma)\big)\\
& = f \big( g([\sigma, v], w) + g(v, [\sigma, w])\big)\\
& = f\big(\rho_\sigma(g(v,w)) \big) =  \rho_{f\sigma}(g(v,w))\,,
\end{align*}
as $\sigma$ is in the kernel of $g$, i.e., $g(\sigma, w) = 0$ and $g(v,\sigma) =0$, and $\rho_{f \sigma } = f \, \rho_\sigma$. Thus, $f \sigma$ is a Killing section.
\end{proof}
\begin{definition}\label{def:StatFra}
  A Carrollian Lie algebroid is said to be a
  \begin{enumerate}
      \item \emph{stationary Carrollian Lie algebroid} if every section of $L$ is Killing; 
      \item a \emph{framed Carrollian Lie algebroid} if a frame of $\Sec(L)$ has been fixed.
  \end{enumerate}
\end{definition}
Moving on to morphisms, we have the following.
\begin{definition}
A {morphism of Carrollian Lie algebroids} 
$$\phi :  (A_1, [-,-]_1, \rho_1, g_1, L_1) \longrightarrow (A_2, [-,-]_2, \rho_2, g_2, L_2)\,, $$ 
over the same base manifold $M$, is a vector bundle homomorphism (over the identity) that 
\begin{enumerate}[itemsep=0.5em]
\item is a  Lie algebra homomorphism, i.e., $\phi^\# ([u,v]_1) = [\phi^\#(u), \phi^\#(v)]_2$ and $\rho_2 \circ \phi^\# = \rho_1$;
\item is an isometry, i.e., $g_2(\phi^\#(u), \phi^\#(v)) =  g_1(u,v)$; and
\item  respects the kernels, i.e., $\phi^\#(\Sec(L_1)) = \Sec(L_2)$,
\end{enumerate}
where $\phi^\# : \Sec(A_1) \rightarrow \Sec(A_2)$ is the induced map between sections. \emph{Morphisms of framed Carrollian Lie algebroids} are morphisms of Carrollian Lie algebroids that satisfy the further condition $\phi^\#(\sigma_1) = \sigma_2$, where $\sigma_1$ and $\sigma_2$ are the fixed frames of $\Sec(L_1)$ and $\Sec(L_2)$, respectively. The resulting category of Carrollian Lie algebroids over $M$ we denote as $\catname{CarLieAlg}_M$. Similarly, the category of framed Carrollian Lie algebroids over $M$ we denote as $\catname{FCarLieAlg}_M$.
\end{definition}
The notion of infinitesimal symmetries of a Carrollian Lie algebroid are defined following the classical notion of infinitesimal symmetries of a (weak) Carrollian manifold.
\begin{definition}
Let $(A, [-,-], \rho, g, L)$ be a Carrollian Lie algebroid. A section $u \in \Sec(A)$ is an \emph{infinitesimal symmetry} if 
\begin{enumerate}[itemsep=0.5em]
\item it is Killing, i.e., $\mathcal{L}_u g =0$; and
\item it is an infinitesimal symmetry of $L$, i.e., $\mathcal{L}_u v := [u,v] \in \Sec(L)$, for all $v \in \Sec(L)$. 
\end{enumerate}
\end{definition}
Directly from the properties of the Lie derivative, it can easily be seen that the set of infinitesimal symmetries of a Carrollian Lie algebroid forms a Lie algebra, which we denote as $\mathfrak{carr}(A)$. Associated with this, we define $\rho\big(\mathfrak{carr}(A) \big) =:\mathfrak{carr}(M) \simeq \mathfrak{carr}(A)/\ker(\rho) \subset \Vect(M)$ (where we have used the First Isomorphism Theorem of Lie algebras). We interpret the Lie algebra $\mathfrak{carr}(M)$ as describing infinitesimal diffeomorphisms of $M$ inherited from the larger Lie algebra $\mathfrak{carr}(A)$. As the anchor has, in general, a non-trivial kernel,  there are ``hidden symmetries'' in that there are symmetries of the Carrollian  Lie algebroid that are not `seen' at the level of the base manifold. We remark that, in general, both $\mathfrak{carr}(A)$ and $\mathfrak{carr}(M)$ are infinite-dimensional.  
\begin{definition}
The \emph{Carroll distribution} of a Carrollian Lie algebroid $(A, [-,-], \rho, L)$ is the Stefan--Sussmann distribution   $\mathcal{C} := \rho(L) \subset \sT M$.  The associated singular foliation we refer to as the \emph{Carroll foliation}.
\end{definition}
A Carroll distribution is, in general, a singular (involutive) Stefan--Sussmann distribution that can jump between rank-1 and rank-0. We interpret the Carroll distribution as locally defining a `null direction' in $M$; however, this direction collapses at points where the rank is zero.  We stress that the singular behaviour is in $\mathcal{C}$ while $L$ is a genuine line bundle over $M$. That is, the possible singular nature is due to the anchor. Via the Rank-Nullity Theorem, insisting that $\ker(\rho|_L)$ is trivial, i.e., contains only the zero section, implies that $\mathcal{C}\subset \sT M$ is a non-singular distribution of rank-1. That is, we have a trivial line bundle inside $\sT M$.  In the other extreme, if $\rho|_L$ is the zero map, then the distribution collapses, i.e., is constant rank zero, and the notion of a local null direction is lost.\par
The Carroll foliation is, in general, singular, i.e., the dimension of the leaves is not constant (see \cite{Laurent-Gengoux:2024} for further details of singular foliations). The leaves of the Carroll foliation we will refer to as \emph{Carroll leaves} and the leaf containing the point $m \in M$ we denote as $\cO_m$. Recall that any given point lies in exactly one leaf.  A theorem of Stefan--Sussmann tells us that the set of Carroll leaves $\{ \cO_m~~|~~ m \in M\}$ forms a partition of $M$.  We have five types of leaves in a general Carroll foliation:
\begin{enumerate}
\item $1$-dimensional leaves diffeomorphic to $\R$;
\item $1$-dimensional leaves diffeomorphic to $S^1$;
\item $1$-dimensional leaves diffeomorphic to a closed interval - these are bounded by singular points;
\item $1$-dimensional leaves diffeomorphic to a half-open interval - these are half bounded by a singular point;
\item $0$-dimensional leaves - these are the singular points.
\end{enumerate}
\begin{definition}\label{def:LPath}
Let $(A, [-,-], \rho, g, L)$ be a Carrollian Lie algebroid. A \emph{$L$-path} is a smooth path $\alpha : I \rightarrow A$, over the base path $\gamma(t) := \pi(\alpha(t))$ ($I = [t_0, t_1] \subset \R$) such that 
\begin{enumerate}
\item $\rho(\alpha(t)) = \dot{\gamma}(t)$ for all $t \in I$; and
\item $\Image(\alpha)\subset L $.
\end{enumerate}
\end{definition}
\begin{proposition}\label{prop:LSingLeaf}
 Let $\alpha : I \rightarrow A$ be a $L$-path with base path $\gamma : I \rightarrow M$ on a Carrollian Lie algebroid $(A, [-,-], \rho, g, L)$. Then $\Image(\gamma)$ lies in a single leaf of the associated Carroll foliation. 
\end{proposition}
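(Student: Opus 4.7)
The plan is to reduce the statement to a claim about a single vector field on $M$ by using the triviality of $L$, and then invoke uniqueness of solutions to an ODE to show the base path cannot jump between orbits.

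First, I would pick a global frame $\sigma \in \Sec(L)$ (guaranteed by the triviality of $L$). Since $\Image(\alpha) \subset L$ and $\alpha$ covers $\gamma$, there exists a smooth function $h : I \rightarrow \R$ such that
\[
\alpha(t) = h(t)\, \sigma(\gamma(t)) \quad \text{for all } t \in I.
\]
Setting $X := \rho_\sigma \in \Vect(M)$, condition (1) of Definition \ref{def:LPath} becomes the ODE
\[
\dot{\gamma}(t) = h(t)\, X(\gamma(t)), \qquad \gamma(t_0) = p,
\]
where $p := \gamma(t_0)$. Note that the Carroll distribution $\mathcal{C} = \rho(L)$ is pointwise $\mathcal{C}_q = \Span\{X(q)\}$, so its leaves are precisely the orbits of $X$: a leaf is either a single zero of $X$ (a $0$-dimensional leaf) or a maximal integral curve of $X$ through a regular point (a $1$-dimensional leaf).

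The strategy is now a time-reparametrisation argument. Define $s(t) := \int_{t_0}^{t} h(\tau)\, \mathrm{d}\tau$ and let $\phi_s$ denote the local flow of $X$ starting at $p$. Then the candidate curve $\tilde{\gamma}(t) := \phi_{s(t)}(p)$ satisfies
\[
\dot{\tilde{\gamma}}(t) = s'(t)\, X(\tilde{\gamma}(t)) = h(t)\, X(\tilde{\gamma}(t)), \qquad \tilde{\gamma}(t_0) = p.
\]
Since the right-hand side $(t,y) \mapsto h(t)\, X(y)$ is smooth in $y$ (locally Lipschitz), Picard--Lindel\"of uniqueness forces $\gamma = \tilde{\gamma}$ on $I$. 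Consequently, $\Image(\gamma) \subseteq \{\phi_{s(t)}(p) : t \in I\}$, which is contained in the orbit of $X$ through $p$, i.e., in the Carroll leaf $\cO_p$.

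The only conceptual step that requires care is the possibility that $\gamma$ passes through a singular point of the distribution (a zero of $X$) at an intermediate time; one might worry that $\gamma$ could ``jump'' from a $1$-dimensional leaf to a $0$-dimensional leaf or vice versa. This is ruled out by the same uniqueness argument: if $X(\gamma(t_1)) = 0$ for some $t_1 \in I$, then the constant curve at $\gamma(t_1)$ solves the ODE with the initial condition at $t_1$, so by uniqueness $\gamma \equiv \gamma(t_1)$ on all of $I$, and the image is the $0$-dimensional leaf $\{\gamma(t_1)\}$; otherwise $X$ is nowhere zero along $\gamma$, and $\Image(\gamma)$ sits inside the $1$-dimensional orbit of $X$ through $p$. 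In either case, $\Image(\gamma)$ lies in a single Carroll leaf. Independence of the conclusion from the choice of frame $\sigma$ is immediate, since any other frame is of the form $f\sigma$ for a nowhere-vanishing $f \in C^\infty(M)$ and rescales $X$ by $f$, which does not alter its orbits.
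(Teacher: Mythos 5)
Your argument is correct in substance, but it takes a genuinely different route from the paper. The paper's proof is a two-line reduction: it observes that $\dot{\gamma}(t)\in\rho|_L(L_{\gamma(t)})$, so $\gamma$ is an integral curve of the Carroll distribution $\mathcal{C}$, and then cites a general result on singular foliations (\cite[Proposition 1.1.6]{Laurent-Gengoux:2024}) stating that integral curves of an involutive distribution stay in a single leaf. You instead exploit the rank-$1$ structure to give a self-contained elementary proof: trivialise $L$ by a frame $\sigma$, reduce the $L$-path condition to the scalar-reparametrised ODE $\dot{\gamma}=h(t)X(\gamma)$ with $X=\rho_\sigma$, and invoke Picard--Lindel\"of uniqueness. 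What your approach buys is transparency (no black-box citation, and it makes the ``frozen at a singular point'' dichotomy completely explicit, which nicely foreshadows the physical discussion following the proposition); what the paper's approach buys is brevity and immediate generalisation to kernels of higher rank, where your single-vector-field reduction would no longer apply.

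One step deserves more care: you assert that $\tilde{\gamma}(t):=\phi_{s(t)}(p)$ is defined on all of $I$ and then apply uniqueness there, but if $X$ is incomplete the maximal flow domain through $p$ need not a priori contain the whole range of $s$. This is repairable: on the maximal subinterval $J\ni t_0$ where $\tilde{\gamma}$ is defined, uniqueness gives $\gamma=\tilde{\gamma}$, so the flow curve $\phi_s(p)$ stays in the compact set $\gamma(I)$ (recall $I=[t_0,t_1]$ is compact) for all attained parameter values; the escape lemma then forbids the flow from terminating at a finite parameter, so $J=I$. Alternatively, one can bypass the issue entirely with an open--closed argument on $\{t\in I:\gamma(t)\in\cO_p\}$ using only local uniqueness. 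With that patch, and together with your (correct) observation that touching a zero of $X$ forces $\gamma$ to be constant by backward-and-forward uniqueness, the proof is complete.
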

\begin{proof}
From the definition of an $L$-path (Definition \ref{def:LPath}) it is clear that $\dot{\gamma}(t) \in \rho|_L (L_{\gamma(t)}) \subset \sT_{\gamma(t)}M$ for all $t \in I$. Thus, $\gamma$ is an integral curve of the Carroll distribution $\mathcal{C} := \rho(L)$. It is known that any integral curve of an involutive distribution must be contained entirely within a single leaf of the associated foliation; see \cite[Proposition 1.1.6]{Laurent-Gengoux:2024}. Thus, $\Image(\gamma)$ lies in a single leaf of the associated Carroll foliation.
\end{proof}
We interpret $L$-paths $\alpha : I \rightarrow A$ as \emph{admissible generalised worldlines} of massive particles. In particular, their generalised velocity is confined to lie in the kernel of the metric. The \emph{Carroll time} of a particle is understood as a parameter $t \in I$ describing the $L$-path. The \emph{physical worldline} of a massive particle is  understood as the base path $\gamma(t) := \pi(\alpha(t)) \subset M$, which by Proposition \ref{prop:LSingLeaf}, lies in a single Carroll leaf for all $t \in I$;  this we understand as defining Carroll causality.  At non-singular points, we can view a particle as being `stationary' in that it only moves in the `null direction' defined by the Carroll leaf. Thus, no matter the speed of the path taken, the massive particle can not move transversely to the leaf. This means that the ``absolute future'' and ``absolute past'' are reduced to a single spatial point.  However, a particle may be on a Carroll leaf that reaches a singular point, and the `null direction' dissolves and the particle becomes `frozen' at the singular point; this is clear as at a singular point $\dot{\gamma}$ is the zero vector.  
\begin{definition}
A Carrollian Lie algebroid is said to be an \emph{$L$-regular Carrollian Lie algebroid} if $\ker(\rho|_L)$ is trivial, i.e., the Carrollian distribution $\mathcal{C} \subset \sT M$ is regular (non-singular) and of rank $1$.
\end{definition}
Note that for an $L$-regular Carrollian Lie algebroid, the Carroll distribution $\mathcal{C}$ defines an integrable foliation of the base manifold $M$. However, in general, this foliation is not regular or simple; the leaves of the foliation are not necessarily diffeomorphic to $\R$ and might exhibit more complex behaviour, such as being dense or closing on themselves. In particular, it is not the case, in general, that $M$ is a fibre bundle with typical fibre $\R$. \par
Recall that a foliated atlas is only a well-defined notion for a regular (non-singular) foliation, i.e., the associated distribution must be of constant (non-zero) rank. Thus, $M$ admits a foliated atlas if and only if the Carrollian Lie algebroid is $L$-regular.  However, the classical Frobenius theorem tells us that in the neighbourhood of any non-singular point of the Carroll foliation, there always exist coordinates adapted to the foliation. That is, we can always find coordinates $(x^a , t)$ about a non-singular point. Here, the coordinate $t$ describes the `null direction', and the coordinates $x^a$ describe the `traversal directions'. Moreover, admissible changes of coordinates must locally respect the Carroll foliation, and so are of the form
\begin{equation}\label{eqn:AdmCorChange}
    x^{a'} = x^{a'}(x)\,, \qquad t' = t'(x,t)\,.
\end{equation}
The above coordinate changes \eqref{eqn:AdmCorChange} are  interpreted as \emph{Carroll diffeomorphisms} (see \cite{Ciambelli:2018}), i.e., local diffeomorphisms that preserve the Carrollian foliation in the neighbourhood of a non-singular point.   We stress that while $M$ is a manifold, and so admits an atlas by definition, the notion of foliated coordinates is ill-defined around a singular point of the Carroll foliation. Moreover, the generator of the null direction is, in adapted coordinates, the local vector field $\partial_t$, which is precisely the form expected from standard Carrollian geometry. Note that, under coordinate changes \eqref{eqn:AdmCorChange}, 
\begin{equation}
\frac{\partial}{\partial t} \longmapsto \frac{\partial}{\partial t'} = \left(\frac{\partial t}{\partial t'}\right)\frac{\partial}{\partial t}\,,
\end{equation}
and so the local form of Carroll vector field is not invariant, but the null direction it defines \emph{is} invariant, and so a well-defined geometric concept.
\begin{proposition}
If $(A, [-,-], \rho, g, L)$ is an $L$-regular Carrollian Lie algebroid such that the underlying Lie algebroid is transitive and $\Rank(A) = \dim(M)$, then there is an isomorphism of vector bundles $E := A /L \stackrel{\sim}{\rightarrow} \sT M/\mathcal{C}$. 
\end{proposition}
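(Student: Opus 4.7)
The plan is to show that, under the stated hypotheses, the anchor $\rho : A \to \sT M$ is itself an isomorphism of vector bundles, restricts to an isomorphism $L \stackrel{\sim}{\to} \mathcal{C}$, and therefore descends to an isomorphism on the quotients.

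First I would observe that transitivity of the underlying Lie algebroid means that $\rho : A \to \sT M$ is a fibrewise surjection. Combined with the rank assumption $\Rank(A) = \dim(M)$, the rank--nullity theorem applied fibrewise forces $\ker(\rho_m) = 0$ for every $m \in M$. Hence $\rho$ is a fibrewise bijective morphism of vector bundles of equal rank, and so is a vector bundle isomorphism $\rho : A \stackrel{\sim}{\rightarrow} \sT M$.

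Next, the $L$-regularity assumption means $\ker(\rho|_L)$ is trivial, so $\rho|_L : L \to \sT M$ is a fibrewise injection onto its image. By definition, this image is the Carroll distribution $\mathcal{C} = \rho(L) \subset \sT M$, which is therefore a rank-$1$ subbundle (not merely a singular distribution), and the restriction $\rho|_L : L \stackrel{\sim}{\rightarrow} \mathcal{C}$ is an isomorphism of line bundles.

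Finally, I would assemble these two isomorphisms into a short exact sequence argument. Consider the commutative diagram
\begin{equation*}
\xymatrix{
0 \ar[r] & L \ar[r] \ar[d]_{\rho|_L}^{\sim} & A \ar[r] \ar[d]_{\rho}^{\sim} & E \ar[r] \ar[d]^{\bar\rho} & 0 \\
0 \ar[r] & \mathcal{C} \ar[r] & \sT M \ar[r] & \sT M /\mathcal{C} \ar[r] & 0
}
\end{equation*}
whose rows are the defining short exact sequences of $E$ and $\sT M/\mathcal{C}$. The induced map $\bar\rho : E \to \sT M/\mathcal{C}$ exists because $\rho$ sends $L$ into $\mathcal{C}$, is well defined, and, by the five lemma (or directly, since the two outer vertical maps are isomorphisms), is itself an isomorphism of vector bundles. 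This is the desired isomorphism $E \stackrel{\sim}{\rightarrow} \sT M/\mathcal{C}$.

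There is no real obstacle here: once one recognises that the equal-rank transitivity hypothesis upgrades the anchor from a surjection to an isomorphism, the rest is bookkeeping with the quotient construction. The mild point worth being explicit about is that $\mathcal{C}$ is genuinely a smooth subbundle (not just a singular distribution) under these hypotheses, so that the quotient $\sT M/\mathcal{C}$ is itself a vector bundle and $\bar\rho$ is a morphism in $\catname{Vect}_M$.
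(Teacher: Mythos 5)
Your argument is correct, and it takes a somewhat different route from the paper's. The paper constructs the induced map $\bar\rho : E \to \sT M/\mathcal{C}$ directly (lift a section of $E$ to $A$, push forward by $\rho$, project to the quotient), checks that the two bundles have equal rank ($\Rank(A)-1 = \dim(M)-1$), and then concludes by showing $\bar\rho$ is a surjection as a composition of surjections. You instead first observe the stronger fact that under transitivity plus $\Rank(A)=\dim(M)$ the anchor $\rho : A \to \sT M$ is itself a vector bundle isomorphism by fibrewise rank--nullity, that $\rho|_L : L \to \mathcal{C}$ is an isomorphism of line bundles, and then invoke the five lemma on the two defining short exact sequences. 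Both are valid; your version buys the extra observation that $A \cong \sT M$ as vector bundles in this regime (which in particular makes the $L$-regularity hypothesis automatic, since an injective anchor restricts injectively to $L$), while the paper's version is more economical in that it only ever uses surjectivity of $\rho$ and the rank count on the quotients. Your closing remark that $\mathcal{C}$ must be a genuine subbundle for $\sT M/\mathcal{C}$ to be a vector bundle is the same point the paper makes at the outset of its proof.
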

\begin{proof}
First note that  $\sT M/\mathcal{C}$ is a vector bundle  only when $\mathcal{C}$ is non-singular, meaning that the $L$-regularity condition is essential. From the definition of the Carroll distribution, $\mathcal{C}:= A/L$, we observe that for any section $u \in \Sec(A)$, the image $\rho_u$ is a well-defined section of $\sT M/\mathcal{C}$. Thus, we have an induced anchor-like map
$$\bar{\rho} : E \rightarrow \sT M/\mathcal{C}\,,$$
which is defined by taking any section $\bar{u} \in \Sec(E)$, lifting it to a section $u \in \Sec(A)$, and defining $\bar{\rho}(\bar{u}) := \tau(\rho_u)$, where $\tau: \sT M \rightarrow \sT M/\mathcal{C}$. The claim is that the map $\bar{\rho}$ is an isomorphism.\par 
First, we establish that the ranks of the vector bundles are equal. Due to the fact that both $L$ and $\mathcal{C}$  are line bundles, it is clear that $\Rank(E) = \Rank(A) -1$, and $\Rank(\sT M/\mathcal{C}) = \dim(M) -1$. Then via assumption, $\Rank(A) = \dim(M)$ implies that $\Rank(E) = \Rank(\sT M/\mathcal{C})$. As the ranks of the vector bundles are indeed equal, it is sufficient to show that $\bar{\rho}$ is a surjection.  Note that by the conditions of the proposition $\rho: A \rightarrow \sT M$ is a surjection. Furthermore, the projection $\tau: \sT M \rightarrow \sT M/\mathcal{C}$ is also a surjection. The composition of two surjections is again a surjection, and thus $\bar{\rho} : E \rightarrow \sT M/\mathcal{C}$ is a surjection.  Thus, as we have a surjection between vector bundles of equal rank, we conclude that $\bar{\rho}$ is an isomorphism of vector bundles.
\end{proof}
\begin{corollary}
If $(A, [-,-], \rho, g, L)$ is an $L$-regular Carrollian Lie algebroid such that the underlying Lie algebroid is transitive and $\Rank(A) = \dim(M)$, then the vector bundle $\sT M/\mathcal{C}\rightarrow M$ comes equipped with an induced non-degenerate metric given by 
$$g_{\sT M/\mathcal{C}}(\bar{X}, \bar{Y}):= g_E(\bar{\rho}^{\,-1}(\bar{X}), \bar{\rho}^{\,-1}(\bar{Y}))\,,$$
with $\bar{X}, \bar{Y} \in \Sec(\sT M/\mathcal{C})$.
\end{corollary}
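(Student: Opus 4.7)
The plan is to invoke the preceding proposition as a black box: under the stated hypotheses ($L$-regular, transitive, $\Rank(A) = \dim(M)$), we already have a vector bundle isomorphism $\bar{\rho}: E \to \sT M/\mathcal{C}$, and we also have from the earlier proposition in Subsection \ref{subsec:CarrollianLieAlgebroids} that $g_E$ is a well-defined non-degenerate metric on $E = A/L$. The corollary then amounts to transporting $g_E$ along the isomorphism. So the entire argument is a formal verification that the pushforward of a non-degenerate fibrewise inner product by a vector bundle isomorphism is again a non-degenerate fibrewise inner product.

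First, I would observe that the formula is well-posed: since $\bar{\rho}$ is a vector bundle isomorphism, $\bar{\rho}^{-1}: \sT M/\mathcal{C} \to E$ is a $C^\infty(M)$-linear map on sections, so the right-hand side of the displayed formula is a smooth function on $M$. Symmetry is immediate from the symmetry of $g_E$, and $C^\infty(M)$-bilinearity follows from the $C^\infty(M)$-bilinearity of $g_E$ together with the $C^\infty(M)$-linearity of $\bar{\rho}^{-1}$. Hence $g_{\sT M/\mathcal{C}}$ is a metric on $\sT M/\mathcal{C}$ in the sense of the earlier definition.

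For non-degeneracy, suppose $\bar{X} \in \Sec(\sT M/\mathcal{C})$ satisfies $g_{\sT M/\mathcal{C}}(\bar{X}, \bar{Y}) = 0$ for all $\bar{Y} \in \Sec(\sT M/\mathcal{C})$. Unpacking the definition, this reads
$$g_E\bigl(\bar{\rho}^{\,-1}(\bar{X}),\, \bar{\rho}^{\,-1}(\bar{Y})\bigr) = 0 \qquad \text{for all } \bar{Y} \in \Sec(\sT M/\mathcal{C}).$$
Because $\bar{\rho}^{-1}$ is a bijection at the level of sections, as $\bar{Y}$ ranges over $\Sec(\sT M/\mathcal{C})$ the element $\bar{\rho}^{-1}(\bar{Y})$ ranges over all of $\Sec(E)$. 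Non-degeneracy of $g_E$ then forces $\bar{\rho}^{-1}(\bar{X}) = 0$, and applying $\bar{\rho}$ yields $\bar{X} = 0$. This establishes non-degeneracy of $g_{\sT M/\mathcal{C}}$.

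There is no substantive obstacle here; the content of the corollary is packaged entirely in the preceding proposition, and the only thing left is the straightforward verification that transport of structure by an isomorphism preserves non-degeneracy. The one point worth flagging explicitly is the role of $L$-regularity, which is what makes $\sT M/\mathcal{C}$ a bona fide vector bundle in the first place (without it, $\mathcal{C}$ can jump rank and the quotient is not a vector bundle, so the statement would be vacuous).
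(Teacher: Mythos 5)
Your proof is correct and follows exactly the route the paper intends: the paper states this corollary without proof precisely because it is the transport of the non-degenerate metric $g_E$ along the isomorphism $\bar{\rho}$ from the preceding proposition. Your explicit verification of well-posedness, symmetry, $C^\infty(M)$-bilinearity, and non-degeneracy fills in that implicit argument faithfully, and your remark on the role of $L$-regularity matches the paper's own emphasis.
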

The metric $g_{\sT M/\mathcal{C}}$ is viewed as a ``partial metric'' on $M$ in that we have a non-degenerate metric on the transverse directions to the Carrollian distribution. This mimics the classical situation on Carrollian manifolds where there is an underlying  metric transverse to the `null direction’. We stress that the construction of this non-degenerate metric is only well-defined for the case of  transitive $L$-regular Carrollian Lie algebroids with $\Rank(A) = \dim(M)$; such Carrollian Lie algebroids we view as the  minimal generalisation of (weak) Carrollian manifolds. However, in order to deal with singular Carrollian distributions, the $L$-regularity needs to be dropped.
%
%
\subsection{Specific classes of Carrollian Lie algebroids}\label{subsec:Examples}
In this subsection, we present some foundational examples of Carrollian Lie algebroids. We remark that the underlying Lie algebroid structures are well-known mathematical examples.
\begin{example}[Carrollian Tangent Algebroids]
A weak Carrollian manifold $(M, g, \kappa)$ can be viewed as a framed Carrollian Lie algebroid  by setting $A:=\sT M$ with its standard Lie algebroid structure and defining $L := \ker(g)$ with $\kappa$ defining the fixed frame.  In this case, as the anchor is the identity map, we have $\mathcal{C} = L$ and thus the Carroll distribution is non-singular. 
\end{example}
\begin{example}\label{exm:VectorField}
As a simple example to demonstrate that the singular nature of a Carrollian Lie algebroid is in the anchor, we consider the following. Let $M$ be a smooth manifold and $X \in \Vect(M)$ be a chosen vector field. The trivial line bundle $A := M \times \R$ can be considered as a Lie algebroid as follows. First note that $\Sec(A) \simeq C^\infty(M)$. The Lie bracket is defined as 
$$[\psi, \chi] :=  -X(\psi)\chi + \psi X(\chi)\,,$$
for all $\psi , \chi \in \Sec(A)$.  The Jacobi identity  for the above bracket can be directly verified, and the anchor map shown to be $\rho_\psi(-) := \psi X(-)$.  We can then equip  $A$ with the zero metric, i.e., $g(\psi,\chi) =0$ for all sections of $A$.  Clearly, we have $L = A$, and thus we have a Carrollian Lie algebroid.  The Carroll distribution $\mathcal{C} := \rho(L) = \rho(A)$ is generated by the chosen vector field $X$. If this vector field is singular, then we have a singular  Stefan--Sussmann distribution.\par 
As a specific example, consider $M = \R^2$, which we equip with standard rectangular coordinates $(x,y)$, and the vector field $X = y \partial_x$. The singular locus of $X$ is the whole $x$-axis, i.e, the points defined by $(x,0)$. Thus, we have a  singular Stefan--Sussmann distribution.   In this case, the associated Carroll foliation consists of three distinct kinds of leaves. 
\begin{itemize}
    \item Upper Leaves: for every $c >0$, $\mathrm{L}^+_c := \{(x,c)~~|~~ x \in \R \}$\,;
    \item Lower Leaves: for every $c <0$, $\mathrm{L}^-_c := \{(x,c)~~|~~ x \in \R \}$\,;
    \item Singular Leaves: for every $x_0 \in \R$, $\mathrm{L}_{x_0}:= \{ (x_0, 0)\}$\,.
\end{itemize}
This foliation (in fact, a stratification) means that Carroll particles may move on either the upper or lower leaves, but become trapped or frozen at the $x$-axis and cannot cross. 
\end{example}
\begin{remark}
Observe that $M \simeq M \times \{ 0\}$ is a rank-$0$ vector bundle, and so cannot support a Carrollian structure; there are no line subbundles.  If the vector bundle $A$ is rank one, then via the above example, the Carrollian structure is trivial, i.e., $A = L$ and the degenerate metric is the zero metric. Thus, we require $\textrm{rank}(A) \geq 2$ for the possibility of non-trivial Carrollian structures.
\end{remark}
\begin{example}[Carrollian Lie Algebras]
A Carrollian Lie algebroid over a point, which we refer to as a \emph{Carrollian Lie algebra}, consists of 
\begin{enumerate}[itemsep=0.5em]
\item a real Lie algebra $(\mathfrak{g}, [-,-])$;
\item a one-dimensional Lie subalgebra $\mathfrak{l} \subset \mathfrak{g}$; and
\item a degenerate symmetric bilinear form $g : \mathfrak{g} \times \mathfrak{g} \rightarrow \R$ such that $\ker(g) =\mathfrak{l}$.
\end{enumerate}
Note that as the Lie subalgebra $\mathfrak{l}$ is one-dimensional, it is an abelian Lie algebra, i.e., the Lie bracket is zero.   As the anchor is the zero map, the Carroll foliation is identified with the single point, and so has `totally collapsed'. Dynamics here are trivial; a Carroll particle is frozen at the single point.\par 
Specifically, non-semisimple Lie algebras $\mathfrak{g}$ where the Killing form $k =:g$ (which is a symmetric bilinear form) has a rank-1 kernel are examples. The metric is defined as $g(u,v) := \tr \big( \ad_u \circ \ad_v\big)$; recall $\ad_u v := [u,v]$. In this case, the structure is automatically  ad-invariant, i.e.,
$$g([u,v], w) + g(v,[u,w]) =0\,,$$
for all $u,v,w \in \mathfrak{g}$. The Levi decomposition (see, for example, \cite[Chapter 3]{Varadarajan:1984}) leads to the identification $\mathfrak{g} \cong \mathfrak{s} \ltimes \mathfrak{l}$, where $\mathfrak{s}$ is a semisimple Lie algebra. Such Lie algebras we refer to as \emph{ad-invariant Carrollian Lie algebras}.
\end{example}
\begin{remark}
Figueroa-O’Farrill \cite{Figueroa-O’Farrill:2023} defines a slightly more restrictive notion where the kernel must be spanned by a central element, i.e., the spanning element commutes with all the elements of $\mathfrak{g}$. In this case, the semidirect product reduced to a direct product, i.e., $\mathfrak{g} \cong \mathfrak{s} \times \mathfrak{l}$. Thus, Figueroa-O’Farrill's ad-invariant Carrollian Lie algebras provide an important class of Carrollian Lie algebroids.
\end{remark}
\begin{example}[Carrollian Action Lie Algebroids]
An action of a Lie algebra $\mathfrak{g}$ on a manifold $M$ is a Lie algebra homomorphism $\mathsf{a} : \mathfrak{g} \rightarrow \Vect(M)$. The associated action Lie algebroid is defined as follows. The vector bundle structure is given by $A := M \times \mathfrak{g}$ (over $M$). Thus, sections are $\Sec(A) \simeq C^\infty(M)\otimes \mathfrak{g}$. Basic sections are of the form $u = f \,\zx$, with $f \in C^\infty(M)$ and $\zx \in \mathfrak{g}$ is an element of a basis of the Lie algebra. General sections can be written as a sum of basic elements with smooth functions as the coefficients. The anchor is defined as $\rho_{f\, \zx} := f\, \mathsf{a}_\zx \in \Vect(M)$. The Lie bracket is defined via basic sections as 
$$[f_1\, \zx_1 , f_2 \, \zx_2] := f_1 \mathsf{a}_{\zx_1}(f_2) \, \zx_2 -f_2 \mathsf{a}_{\zx_2}(f_1) \, \zx_1 + f_1 f_2 \, [\zx_1, \zx_2]\,.$$
We now equip this action Lie algebroid with a Carrollian structure:
\begin{enumerate}[itemsep=0.5em]
\item A degenerate metric $g$ on $M \times \mathfrak{g}$;
\item A trivial line bundle $L \subset A$ such that $\ker(g) = L$.
\end{enumerate}
As the line bundle $L$ is trivial, there exists a nowhere vanishing section $\sigma : M \rightarrow M \times \mathfrak{g}$ that provides a frame for $\Sec(L)$. The Carrollian distribution $\mathcal{C} := \rho(L)$, is then spanned by $\rho_\sigma = \mathsf{a}_\sigma \in \Vect(M)$. Note that a Lie algebra action may have a non-trivial kernel, and so the Carroll distribution may be singular. In particular, it is well-known that a Lie algebra action is injective (so has a trivial kernel) if and only if the associated group action is free.  Thus, in general, one expects singular Carroll distributions for infinitesimal actions associated with non-free group actions.\par 
 As a specific example, consider the standard action of $\mathfrak{gl}_2(\R)$ on $\R^2$. This corresponds to a non-free group action of $GL_2(\R)$ as the zero vector in $\R^2$ is a fixed point.  A section of $A = \R^2 \times \mathfrak{gl}_2(\R) $ can be written as
$$\zx = \begin{pmatrix}
    \alpha & \beta\\
    \gamma & \delta 
\end{pmatrix}\,, $$
where the components are functions on $\R^2$, and the action can be expressed as 
$$\rho_\zx = (\alpha\,x + \beta \, y) \frac{\partial}{\partial x} + (\gamma\,x + \delta\, y) \frac{\partial}{\partial y}\,, $$
where we have used rectangular coordinates $(x,y)$ on $\R^2$. We equip this Lie algebroid with the metric $g(\zx_1, \zx_2) := \beta_1 \beta_2 + \gamma_1 \gamma_2 + \delta_1 \delta_2$. Clearly, $\ker(g)$ is spanned by
$$\sigma = \begin{pmatrix}
    1 & 0\\
    0 & 0 
\end{pmatrix}\,,$$
and so we have a Carrollian Lie algebroid.  The Carroll distribution $\mathcal{C}$ is thus generated by $\rho_\sigma =  x \partial_x$. The singular locus of this vector field is the $y$-axis, i.e., all the points $(0,y)$. The associated Carroll foliation is identical to that found in Example \ref{exm:VectorField} upon exchanging the $x$-axis and $y$-axis.
\end{example}
\begin{example}[Riemannian Lie Algebroids + a Vector Field]\label{exa:RieLieAlgVec}  Let $M$ be a smooth manifold and $X \in \Vect(M)$ be a chosen vector field (not necessarily non-singular). The trivial line bundle $L := M \times \R$ can be considered as a Lie algebroid as follows. First note that $\Sec(L) \simeq C^\infty(M)$. The Lie bracket is defined as 
$$[\psi, \chi] :=  -X(\psi)\chi + \psi X(\chi)\,,$$
for all $\psi , \phi \in \Sec(L)$.  The Jacobi identity  for the above bracket can be directly verified, and the anchor map shown to be $\rho_\psi := \psi X$.\par 
Let $(A_0, [-,-], \rho, \hat{g})$ be a Riemannian Lie algebroid over $M$. The direct sum $A := A_0 \oplus L$ comes with a Lie algebroid structure as follows. Sections of $A$  are of the form $u+\psi$, with $u \in \Sec(A)$ and $\psi \in \Sec(L)$. The Lie bracket and anchor are defined as
$$[u + \psi , v + \chi]:= [u,v] - X(\psi)\chi + \psi X(\chi)\,, \qquad \rho_{u + \psi}(f):= \rho_u(f) +\psi X(f)\,.$$
Note that the cross-terms are zero,  i.e., $[u,\chi] =[v, \psi] =0$. The induced degenerate metric on $A$ is given by $g(u+ \phi, v + \chi) := \hat{g}(u,v)$. Clearly, $\ker(g) = \Sec(L)$. Moreover, the Carroll distribution and associated Carroll foliation  will be singular if the vector field $X$  is singular (see Example \ref{exm:VectorField}).
\end{example}
\begin{remark}
The above example directly generalises to the direct sum of a Riemannian or pseudo-Riemannian Lie algebroid and any Lie algebroid (over the same base) of rank-1 that is trivial as a line bundle.    
\end{remark}
\begin{example}[Spacetimes] As a specification of the previous example, consider a spacetime $(M, \hat{g} , \tau)$, that is a $d+1$-dimensional Lorentzian manifold equipped with a nowhere vanishing timelike (smooth) vector field $\tau \in \Vect(M)$ that defines the future direction at every point. We can associate a canonical Carrollian Lie algebroid with any spacetime, where $A = \sT M \oplus L$ and the product structure is defined in Example \ref{exa:RieLieAlgVec} using $\tau$ and the standard Lie algebroid structure on the tangent bundle. The Carroll distribution in this case is regular. 
\end{example}
\begin{example}[Carrollian Atiyah Algebroids]
Consider a principal $G$-bundle $\tau : P \rightarrow M$, where we denote the right action as $\mathsf{a}: P \times G \rightarrow P$. For any $\mathsf{g} \in G$, we have a diffeomorphism $\mathsf{a}_{\mathsf{g}} : P \rightarrow P$. A Carrollian structure $(\hat{g}, \kappa)$ on $P$ is said to be a $G$-invariant Carrollian structure if $\hat{g}(\zx, \eta) = \hat{g}\big((\mathsf{a}_{\mathsf{g}})_* \zx,(\mathsf{a}_{\mathsf{g}})_*\eta \big)$, and $(\mathsf{a}_\mathsf{g})* \kappa = \kappa$, for all $\zx, \eta \in \Vect(P)$ and $\mathsf{g} \in G$.\par 
The Atiyah algebroid is then defined as $A := \sT P/M \stackrel{\pi}{\rightarrow} M$, with the anchor $\rho : A \rightarrow \sT M$ being the restriction of $\rmd \tau : \sT P \rightarrow \sT M$ (which is $G$-invariant), and the Lie bracket is the restriction of the standard Lie bracket of vector fields, Note that sections of $A$ are $G$-invariant vector fields and $\ker(\rho) \simeq P \times_G \mathfrak{g}$, where $\mathfrak{g}$ is the Lie algebra of $G$.\par 
We have the natural inclusion $\iota : \Sec(A) \hookrightarrow \Vect(P)$ and so we define 
$$g(u,v) := \hat{g}(\iota(u), \iota(v))\,,$$
for arbitrary $u,v \in \Sec(A)$. As $\hat{g}$ is $G$-invariant, $\hat{g}(\iota(u), \iota(v))$ is constant along the fibres of $P$ (recall that the action of $G$ on $P$ preserves the fibres). Thus, $g(u,v)$ is a well-defined smooth function on $M$ and so defines a degenerate metric on $A$. As the Carroll vector field $\kappa$ is $G$-invariant, we define $\sigma := \iota^{-1} \kappa \in \Sec(A)$, which is a nowhere vanishing section of $A$. Thus, we have the trivial line subbundle $L \subset A$ defined via $\Sec(L) := \Span\big\{  \sigma \big\}$.\par 
As $\kappa$ generates the kernel of $\hat{g}$, it is clear that $g(\sigma, u) = \hat{g}(\iota(\sigma), \iota(u)) = \hat{g}(\kappa, \iota(u))=0$, for all $u \in \Sec(A)$. Thus, $\ker(g) \subset \Sec(L)$. As the initial structure is $G$-invariant and $\ker(\hat{g})$ is rank one,  the reduced metric $g$ on $A$ also has a rank-1 kernel. Since $L$ is a one-dimensional subbundle contained in $\ker(g)$, we observe that $\Sec(L) =\ker(g)$. \par 
Thus, $(\sT P/G, [-,-], \rho, g, L)$ is a  Carrollian Lie algebroid. It is entirely possible, in general, that there are  points $m \in M$, such that $\sigma_p$ is a vertical vector and so $\rho(\sigma_p) =0$. Meaning that, in general, the Carroll distribution $\mathcal{C} := \rho(L)$  will be a singular Stefan--Sussmann distribution.
\end{example}
\begin{remark}
We remind the reader that while every Lie groupoid can be differentiated to construct a Lie algebroid, not every Lie algebroid can be integrated to a Lie groupoid; there are obstructions to the integration of Lie algebroids (see \cite{Crainic:2003}). The integrating objects for Carrollian Lie algebroids, which we refer to as \emph{Carrollian Lie groupoids}. That is, a Lie groupoid $\mathcal{G}$ that is the integrating object for $A$, equipped with 
\begin{enumerate}[itemsep=0.5em]
\item a right or left invariant degenerate metric $G$, depending on whether $A$ is defined in terms of right or left invariant vector fields;
\item a one-dimensional Lie subgroupoid $\Gamma \subset \mathcal{G}$ that is the integrating object for $L$, such that  $\ker(G) = \sT \Gamma$.
\end{enumerate}
 This is consistent with the literature on the integrating objects for Riemannian Lie algebroids (see Boucetta \cite{Boucetta:2011}), where the resulting groupoids carry a non-degenerate  right/left invariant metric. The existence of $\Gamma \subset \mathcal{G}$ is guaranteed by the integrability of the subalgebroid $L \subset A$. We note that the resulting Lie subgroupoid $\Gamma$ may be an immersed rather than an embedded submanifold. Details of the integration problem for Carrollian Lie algebroids remain future work. We further comment that Figueroa-O’Farrill \cite{Figueroa-O’Farrill:2023} has defined and studied  bi-invariant Carrollian structures on Lie groups. The Carrollian Lie groupoids are a more general notion that includes Lie groups equipped with bi-invariant Carrollian structures.  In particular, Figueroa-O’Farrill's Carrollian Lie groups describe homogeneous spaces, while Carrollian Lie groupoids may describe infinitesimal Carrollian symmetry on more general manifolds where the symmetries only exist locally or along specific foliations. We will not further discuss Carrollian Lie groupoids in this paper.
\end{remark}

%
%
\subsection{Mixed null-spacelike hypersurfaces}\label{subsec:MixHypSur} In general, mixed character hypersurfaces,  i.e., hypersurfaces where the spacelike, timelike, or null nature can change pointwise, are complicated submanifolds of a spacetime, see Mars \& Senovilla \cite{Mars:1993} for example. While such objects seem exotic, they do appear in general relativity as, for example, thin shells.  Another example G\"odel's universe, where embedded hypersurfaces (without boundary) cannot be spacelike everywhere. In this subsection, we will build, under some simplifying assumptions, a Carrollian Lie algebroid from a mixed null-spacelike hypersurface. \par 
Consider a hypersurface (i.e., a properly embedded 3-dimensional submanifold) $\Sigma \subset M$ of a 4-dimensional Lorentzian manifold $(M, g_M)$ that is of mixed null-spacelike character. The hypersurface decomposes into components $ \Sigma = \Sigma_{null} \cup \Sigma_{space}$ such that: 
\begin{itemize}
\item the normal vector of  $\Sigma_{null}$ is null, and the induced metric $g_M|_{\Sigma_{null}}$ is degenerate and its pointwise diagonalisation is $\diag(1,1,0)$; and
\item the normal vector of $\Sigma_{space}$ is timelike, and the induced metric $g_M|_{\Sigma_{space}}$ is non-degenerate and its pointwise diagonalisation is $\diag(1,1,1)$.
\end{itemize}
We will make two simplifying geometric assumptions:
\begin{enumerate}
    \item  $\Sigma_{null}$ and $\Sigma_{space}$ are submanifolds of $\Sigma$; however, they need not be connected submanifolds;
    \item  there exists a \emph{characteristic vector field} $\kappa \in \Vect(\Sigma)$, i.e., $\kappa|_{\Sigma_{space}}$ identically vanishes,  and $\kappa|_{\Sigma_{null}}$ is nowhere vanishing on $\Sigma_{null}$.
\end{enumerate}
 We stress that the vector field $\kappa$ is not the normal vector field of the hypersurface, rather it captures the null-direction or lack thereof. 
It must be noted that these assumptions are very restrictive and rare for realistic spacetime embeddings.  However, we will make these assumptions in order to avoid technical issues. In particular, a globally defined $\kappa$ may need to be relaxed, and the constructions only applied locally in more physically reasonable embeddings.
 \par 
The tangent bundle restricted to $\Sigma_{null}$ defines a short filtration of vector bundles over  $\Sigma_{null}$
\begin{equation}\label{eqn:HypSurFil}
 L_{null} \subset L_{null}^\perp \subset \sT \Sigma|_{\Sigma_{null}}\,,
\end{equation}
where $L_{null}$ is the null line bundle and $L_{null}^\perp$ is its orthogonal complement in $\sT \Sigma|_{\Sigma_{null}}$. Note, we have a vector bundle over the null component $\Sigma_{null}$. The quotient bundle $ S := L_{null}^\perp/ L_{null}$ is the rank-2 screen bundle. We remark that we do not have such a filtration of vector bundles on $\Sigma_{space}$ as the only vector that is null and spacelike is the zero vector. Moreover $\kappa|_{\Sigma_{null}}$ provides a global frame for $L_{null}$.\par 
The core idea is to replace the short filtration \eqref{eqn:HypSurFil} with a direct sum of vector bundles over $\Sigma$ that separates the null direction and its transversal directions and use this to construct a Carrollian Lie algebroid over $\Sigma$ (see Example \ref{exa:RieLieAlgVec}). %
 \begin{enumerate}
\item As $\Sigma_{null}$ admits a null generator by assumption, specifically $\kappa|_{\Sigma_{null}}$, $L_{null}$ is  a trivial line bundle over $\Sigma_{null}$. We then define $L \rightarrow \Sigma$ as the smooth trivial extension of  $L_{null}$. That is, we have a trivial line bundle $L \simeq \Sigma \times \R$, such that $L|_{\Sigma_{null}} \simeq L_{null}$ as line bundles. Note that all trivial line bundles over a given base are isomorphic, thus $L$ is guaranteed to exist and is unique up to isomorphism. We can then consider the characteristic vector field as a section of $L$, i.e., $\kappa \in \Sec(L)$. The trivial line bundle $L$ is then a Lie algebroid with the structure being given by 
$$[\psi, \chi] :=  -\kappa(\psi)\chi + \psi \kappa(\chi)\,, \qquad \rho_\psi(f) :=  \psi \,\kappa(f)\,,$$
for all $\psi , \phi \in \Sec(L)$ and $f \in C^\infty(\Sigma)$.
\item The screen bundle $S := L_{null}^\perp/L_{null}$ is a rank-2 vector bundle over $\Sigma_{null}$. Moreover, $S$ is equipped with a positive-definite metric defined as follows. First $\tau : L_{null}^\perp \rightarrow S$ is a quotient map. For any $s,t \in \Sec(S)$, and lifts $\bar{s}, \bar{t} \in L_{null}^\perp$, i.e., we have $\tau(\bar{s}) =s$ and $\tau(\bar{t}) =t$, we define
$$g_S(s,t) := g_M|_{\Sigma_{null}}(\bar{s}, \bar{t})\,.$$
It can be directly verified that the metric is independent of the lift and is positive-definite.  Note that as $g_M$ is smooth, the induced metric $g_S$ is also smooth. \par 
We then define the vector bundle $A_0$ as the smooth extension of $S$ to all of $\Sigma$. That is, we construct a rank-2 vector bundle over $\Sigma$, such that $A_0|_{\Sigma_{null}} \simeq S$. The existence of such a smooth extension is not automatic for non-trivial $S \rightarrow \Sigma_{null}$; there are topological obstructions. To avoid technical issues, we assume that $S$ is trivial. Due to our assumption that $\Sigma_{null}$ is (smoothly) embedded, the trivial bundle $A_0 \simeq \Sigma \times \R^2$ is guaranteed to exist and is unique up to isomorphism. For concreteness, we may take $\Sigma_{null} \simeq \R^2 \times \R$ as any vector bundle over a contractible base can be trivialised. Examples of such null hypersurfaces arise in the study of plane gravitational waves and thin shells in flat spacetime.  \par 
The vector bundle $A_0$ comes with an inherited non-degenerate metric $g_{A_0}$, which is a smooth extension of $g_S$; standard results in differential geometry show that such a smooth extension can always be constructed via a partition of unity, however, it is not unique. The vector bundle $A_0$ is then considered a Lie algebroid by equipping it with the zero Lie bracket and zero anchor. Note that, in general, $A_0$ does not come with a natural non-trivial Lie algebroid structure. Specifically,  while $\sT \Sigma \simeq  A_0 \oplus L$, it is not, in general, the case that $A_0$ (thought of as a distribution on $\Sigma$) is integrable. 
\end{enumerate} 
Given the above constructions, a Carrollian Lie algebroid describing the mixed hypersurface is the direct sum $A := A_0 \oplus L$ where the Lie bracket and anchor are defined as
$$[u + \psi , v + \chi]:=  -\kappa(\psi)\chi + \psi \kappa(\chi)\,, \qquad \rho_{u + \psi}(f):=  \psi \, \kappa(f)\,.$$
The induced degenerate metric on $A$ is given by $g(u+ \phi, v + \chi) := \hat{g}(u,v)$. Clearly, $\ker(g) = \Sec(L)$. The Carroll distribution $\mathcal{C} := \rho(L)$ is then the singular Stefan--Sussmann distribution generated by $\kappa$. \par 
Some remarks are in order:
\begin{itemize}
\item Carrollian Lie algebroids associated with a mixed null-spacelike hypersurface (with our stated assumptions) are not unique; we have  constructed a \emph{minimal model}. 
\item The physically meaningful part of the construction is the Carrollian distribution on $\Sigma$, and this is independent of the non-canonical extended metric $g_{A_0}$. 
\item  Via the construction above, we have $\sT \Sigma \simeq A_0 \oplus L$, and in particular $A_0$ is rank-2 with a trivial Lie algebroid structure with a non-canonical Riemannian metric. However, one could equip $A_0$ with an independent Lie algebroid structure, or consider some non-trivial Lie algebroid $A'_0$ of some arbitrary (non-zero) rank, equipped with a  non-degenerate metric.  The situation is similar to the BV--BRST formalism in field theory, where there is no unique choice of ghosts, antifields, and antighost, etc. The physics is independent of these choices, and for the case of Carrollian Lie algebroids, it is $\mathcal{C}$ that is the physically relevant aspect.
\end{itemize}
%
%
\subsection{Connections on Carrollian Lie algebroids}\label{subsec:Connections}
Connections, in the form of covariant derivatives specifically, are vital throughout physics and especially when constructing Lagrangian densities, defining geodesics, and (differential) conservation laws. When there are extra geometric structures on the manifold in question, it is natural to consider connections that are compatible with or respect these structures; exactly what is meant will depend on the context.  In the context of Carrollian Lie algebroids, one should consider connections such that under parallel transport, both the degenerate metric $g$  and Lie subalgebroid are preserved.  Lie algebroid connections always exist; this can be established via the use of a partition of unity and a modification of the standard proof for the existence of linear connections on a vector bundle. The question is one of compatibility with the Carrollian structure, and whether this affects the existence of such connections.
\begin{definition}
Let $(A, [-,-], \rho, g, L)$ be a Carrollian Lie algebroid. A Lie algebroid connection $\nabla$ on $A$ said to be
\begin{enumerate}[itemsep=0.5em]
\item \emph{$L$-compatible} if it restricts to $L$, i.e., $\nabla_u v \in \Sec(L)$ for all $u \in \Sec(A)$ and $v \in \Sec(L)$;
\item \emph{metric compatible} if $\nabla g =0$, i.e.,
$\rho_u\big( g(v,w)\big) = g(\nabla_u v, w) + g(v, \nabla_u w)$, for all $u,v,w \in \Sec(A)$;
\item \emph{Carrollian} if it is both $L$-compatible and metric compatible.
\end{enumerate}
\end{definition} 
Recall that an $A$-path is a smooth path $\alpha : I \rightarrow A$ such that 
\begin{equation}
\rho(\alpha(t)) = \dot{\gamma}(t)\,,
\end{equation}
where the base path is defined as $\gamma := \pi \circ \alpha: I \rightarrow M$, here $I = [t_0, t_1] \subset \R$.   An $A$-path can be considered as a section of $A$ as $\alpha(t) \in A_{\gamma(t)}$ defines a point in the fibre over $m = \gamma(t)$ for every $t \in I$.
\begin{definition}
Let $(A, [-,-], \rho, L)$ be a Carrollian Lie algebroid equipped with a Carrollian connection. Then an $A$-path is a Carrollian geodesic if it satisfies 
$$\nabla_{\alpha(t)} \alpha(t) =0\,, $$
for all $t \in I$.
\end{definition}
Given an initial point $m = \gamma(t_0) \in M$ and an initial generalised velocity $a = \alpha(t_0) \in A_m$, the Picard--Lindelöf theorem guarantees the local existence and uniqueness of Carrollian geodesics. \par 
There are two generic classes of Carrollian geodesics.
\begin{enumerate}[itemsep=0.5em]
\item \emph{Carrollian particles}: If $a \in L_m$, then due to $L$-compatibility, $\alpha(t)$ remains in $L$, i.e, the geodesic is an $L$-path, and $\gamma(t)$ lies in the Carroll leaf that contains $m$.  
\item \emph{Carrollian swiftons}: If $a \in A_m/L_m$, then $\alpha(t)$ is not constrained to lie in $L$ and so $\gamma(t)$ will cut across Carrollian leaves. That is, swiftons are not confined to lie on a single Carrollian leaf, i.e., they are free to move transverse to the Carroll foliation. 
\end{enumerate}
Carrollian particles, as defined here, correspond to massive particles as they are confined to move only along Carroll leaves. The term swifton is due to Ecker et al. \cite{Ecker:2024} as a Carrollian version of a tachyon, but one that lacks the pathologies of standard tachyons. In particular,  swiftons are fields/particles capable of propagating at a non-vanishing velocity; in other words, they violate Carroll causality. In the framework here, assuming no external forces, they correspond to Carrollian swiftons as defined above. 
\begin{remark}
External forces may be included as follows. 
\begin{enumerate}[itemsep=0.5em]
\item For Carroll particles, a generalised force is understood as a section $\sF\in \Sec(L)$, and the equation of motion becomes $\nabla_{\alpha(t)} \alpha(t) = \sF(\gamma(t))$. That is, the generalised force is in the direction of $L$, and thus $\gamma(t)$ will change its velocity, but will remain on a single Carroll leaf.
\item For Carroll swiftons,  a generalised force is understood as a section $\sF\in \Sec(A)$, and the equation of motion becomes $\nabla_{\alpha(t)} \alpha(t) = \sF(\gamma(t))$. In this case, the force may have components transverse to $L$, and the $\gamma(t)$ will, as before, cut across Carroll leaves.
\end{enumerate}
\end{remark}
The question of the existence of Carrollian connections is addressed below.  We remark that the general approach to the existence statements will be to modify an arbitrarily chosen connection to be $L$-compatible and/or metric compatible. These proofs parallel the proof of  the existence of metric compatible connections on a Riemannian manifold by modifying a connection using the contorsion tensor. An important aspect of the proofs is the flexibility in defining components of connections in the degenerate direction.
\subsubsection*{$L$-Compatible Connections}
We proceed to describe the fundamental properties of $L$-compatible connections and establish their existence.
\begin{lemma}\label{lem:Lcom}
Let $(A, [-,-], \rho, g, L)$ be a Carrollian Lie algebroid and let $\sigma$ be a frame of $\Sec(L)$. Then if $\sigma \in \Sec(L)$ is parallel with respect to a given Lie algebroid connection $\nabla$ on $A$, then the connection is $L$-compatible.
\end{lemma}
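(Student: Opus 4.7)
The plan is entirely direct and relies only on the Leibniz property of a Lie algebroid connection together with the fact that $L$ is a trivial line bundle with $\sigma$ as a global frame. First I would observe that, since $\sigma$ frames $\Sec(L)$, every section $v \in \Sec(L)$ can be written uniquely as $v = f\sigma$ for some $f \in C^\infty(M)$. The goal is then to verify that $\nabla_u v \in \Sec(L)$ for all $u \in \Sec(A)$.

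Next I would expand $\nabla_u v$ using the defining Leibniz rule of a Lie algebroid connection, obtaining
\begin{equation*}
\nabla_u(f\sigma) = \rho_u(f)\,\sigma + f\,\nabla_u \sigma.
\end{equation*}
The hypothesis that $\sigma$ is parallel, i.e.\ $\nabla_u \sigma = 0$ for every $u \in \Sec(A)$, immediately collapses this to $\nabla_u v = \rho_u(f)\,\sigma$, which is manifestly a section of $L$ because $\sigma$ spans $\Sec(L)$ over $C^\infty(M)$.

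There is no real obstacle here: the only ingredients are the Leibniz rule, the triviality of $L$, and the linearity of $\nabla$ in its second argument. I would conclude by remarking that the argument uses only a single globally defined frame $\sigma$, which is available precisely because $L$ is assumed trivial in Definition \ref{def:CarrollianLieAlgebroid}; if one were to relax the triviality assumption, the same reasoning would still apply locally on any trivialising open set, and $L$-compatibility is a local property.
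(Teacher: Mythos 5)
Your proposal is correct and coincides with the paper's own argument: both write an arbitrary section of $L$ as $f\sigma$, apply the Leibniz rule, and use $\nabla_u\sigma=0$ to conclude $\nabla_u(f\sigma)=\rho_u(f)\,\sigma\in\Sec(L)$. The closing remark about locality under a relaxed triviality assumption is a harmless addition not present in the paper.
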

\begin{proof}
The section $\sigma$ is nowhere vanishing, and as $L$ is trivial, such a section always exists. Any section of $L$ can then be written as $f\, \sigma$ where $f \in C^\infty(M)$.  Assuming that $\sigma$ is parallel, i.e., $\nabla_u \sigma =0$   for all $u \in \Sec(A)$, means that 
$$\nabla_u f\,\sigma = \rho_u(f) \sigma + f \, \nabla_u \sigma =\rho_u(f) \sigma \in \Sec(L)\,. $$
Thus, under the conditions of the lemma, $\nabla$ is $L$-compatible.
\end{proof}
It is important to note that the above lemma established a sufficient condition for a connection to be $L$-compatible.  That is, there are connections that are not of this form.  However, restricting attention to such connections will be essential in establishing the existence of $L$-compatible connections.
\begin{proposition}\label{prop:ExLConn}
$L$-compatible connections exist on  any Carrollian Lie algebroid. 
\end{proposition}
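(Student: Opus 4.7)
The plan is to modify an arbitrary Lie algebroid connection on $A$ by a $C^\infty(M)$-bilinear correction tensor so that a chosen global frame of $\Sec(L)$ becomes parallel, and then invoke Lemma~\ref{lem:Lcom}. Since the existence of Lie algebroid connections on $A$ is already known (e.g., via Křižka), we may pick any Lie algebroid connection $\nabla^0$ on $A$ as our starting point, and fix a nowhere vanishing global frame $\sigma \in \Sec(L)$ (which exists because $L$ is by definition trivial).

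The first key ingredient is to produce a vector bundle splitting $A = L \oplus E'$ for some subbundle $E' \subset A$. This is standard: equip $A$ with an auxiliary positive-definite fibre metric $h$ (built via a partition of unity on $M$ and any local choices of Riemannian metric on trivializations), and set $E' := L^{\perp_h}$. This yields a smooth projection $\pi_L : A \to L$ along $E'$, and since $\sigma$ frames $L$, every $v \in \Sec(A)$ determines a unique $f_v \in C^\infty(M)$ by $\pi_L(v) = f_v \sigma$. The assignment $v \mapsto f_v$ is $C^\infty(M)$-linear, and $f_\sigma = 1$.

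Next I would define the correction
\begin{equation*}
S : \Sec(A) \times \Sec(A) \longrightarrow \Sec(A), \qquad S(u,v) := -\,f_v \, \nabla^0_u \sigma\,.
\end{equation*}
Since $f_v$ is $C^\infty(M)$-linear in $v$ and $\nabla^0_u \sigma$ is $C^\infty(M)$-linear in $u$, the map $S$ is $C^\infty(M)$-bilinear. Therefore
\begin{equation*}
\nabla_u v := \nabla^0_u v + S(u,v)
\end{equation*}
is again a Lie algebroid connection on $A$: $C^\infty(M)$-linearity in $u$ follows from $C^\infty(M)$-linearity of $\nabla^0$ and $S$ in $u$, and the Leibniz rule in $v$ is preserved because adding a $C^\infty(M)$-bilinear tensor does not affect it, namely
\begin{equation*}
\nabla_u(hv) = \rho_u(h)\, v + h\,\nabla^0_u v + S(u, hv) = \rho_u(h)\, v + h\,\nabla_u v\,.
\end{equation*}
Finally, $\nabla_u \sigma = \nabla^0_u \sigma + S(u, \sigma) = \nabla^0_u \sigma - \nabla^0_u \sigma = 0$, so the frame $\sigma$ is parallel. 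By Lemma~\ref{lem:Lcom}, $\nabla$ is $L$-compatible.

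The construction is entirely formal once the splitting $A = L \oplus E'$ is in hand, so the only genuine obstacle to worry about is the global existence of such a splitting on a possibly non-trivial $A$; this is settled by the auxiliary fibre metric argument above, which uses only paracompactness of $M$. The triviality of $L$, which is part of Definition~\ref{def:CarrollianLieAlgebroid}, is crucial at two places: it supplies the global frame $\sigma$, and it allows the $L$-component of any $v \in \Sec(A)$ to be written as a single smooth function times $\sigma$.
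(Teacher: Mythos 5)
Your proof is correct and follows essentially the same route as the paper: modify an arbitrary Lie algebroid connection $\nabla^0$ by the tensor $-\omega(v)\,\nabla^0_u\sigma$ so that the frame $\sigma$ becomes parallel, then invoke Lemma~\ref{lem:Lcom}. Your map $v\mapsto f_v$ is exactly the paper's dual one-form $\omega$ with $\omega(\sigma)=1$; you merely make explicit (via the auxiliary fibre metric and the splitting $A=L\oplus E'$) how such a global covector is obtained, a detail the paper leaves implicit.
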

\begin{proof}
Lie algebroid connections always exist, and so we select one $\nabla^0$.  We 
then define a new connection given by $\nabla_u v := \nabla^0_u v + \Gamma_A(u,v)$. Here $\Gamma_A$ is a Lie algebroid $(1,2)$-tensor. Next we select a frame $\sigma \in \Sec(L)$, and imposing the  parallel condition, $\nabla_u \sigma := \nabla^0_u \sigma + \Gamma_A(u,\sigma) =0$, implies  $\Gamma_A (u,\sigma) = - \nabla^0_u \sigma $. As $\sigma$ is non-singular, the dual one-form  $\omega \in \Sec(L^*)$ exists, that is, there is a Lie algebroid one-form  such that $\omega(\sigma) =1$.  We can then define $\Gamma_A(u,v) := \big(\nabla^0_u \sigma\big)\, \omega(v)$. Thus, the Lie algebroid connection 
$$\nabla_u v :=   \nabla^0_u v -  \big(\nabla^0_u \sigma\big)\, \omega(v)\,,$$
exists and $\sigma$ is parallel with respect to this connection. Using Lemma \ref{lem:Lcom}, we conclude that $\nabla$ is $L$-compatible.
\end{proof}
\begin{remark}
 The proof of the above proposition does not exhaust all possible $L$-compatible connections.  Once we choose a frame of $\Sec(L)$, we can always construct an $L$-compatible connection, but this does not establish that all such connections are of this form.  
\end{remark}
\begin{proposition}
Let $\nabla$ be a $L$-compatible Lie algebroid connection on a Carrollian Lie algebroid $(A, [-,-], g , L)$. Then the $C^\infty(M)$-linear operator defined by the curvature tensor (see \eqref{eqn:RLinOp})
$$R(u,v)|_L : \Sec(L) \longrightarrow \Sec(A)\,, $$
has its image in $\Sec(L)$.
\end{proposition}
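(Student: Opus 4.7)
The plan is to unpack the definition of the curvature tensor and use $L$-compatibility at every step. Recall that for sections $u,v \in \Sec(A)$ and $w \in \Sec(A)$, the curvature is given by
\begin{equation*}
R(u,v)w = \nabla_u \nabla_v w - \nabla_v \nabla_u w - \nabla_{[u,v]} w.
\end{equation*}
I will restrict attention to $w \in \Sec(L)$ and argue that each of the three terms on the right-hand side lies in $\Sec(L)$.

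First, by the $L$-compatibility hypothesis, $\nabla_v w \in \Sec(L)$ whenever $w \in \Sec(L)$. Applying $L$-compatibility again with this new section of $L$ in the second slot, we obtain $\nabla_u(\nabla_v w) \in \Sec(L)$. Symmetrically, $\nabla_v(\nabla_u w) \in \Sec(L)$. For the final term, $w \in \Sec(L)$ and the Lie algebroid bracket $[u,v] \in \Sec(A)$, so $L$-compatibility directly gives $\nabla_{[u,v]} w \in \Sec(L)$.

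Since $\Sec(L)$ is closed under the $C^\infty(M)$-linear operations of addition and subtraction (being the module of sections of a subbundle), the combination of these three terms also lies in $\Sec(L)$. Hence $R(u,v)w \in \Sec(L)$ for every $w \in \Sec(L)$, which is precisely the claim that $R(u,v)|_L$ has image in $\Sec(L)$.

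I do not foresee a genuine obstacle here: the statement is essentially an immediate consequence of the definition of $L$-compatibility, iterated twice. The only minor point to be careful about is that $L$-compatibility must be invoked with a section of $L$ in the second slot of $\nabla$, which is exactly what $\nabla_v w \in \Sec(L)$ provides for the double-derivative terms.
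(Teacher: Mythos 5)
Your proof is correct. It differs slightly in execution from the paper's: the paper invokes tensoriality of $R$ to reduce to a chosen frame $\sigma$ of $\Sec(L)$, writes $\nabla_u\sigma = f_u\,\sigma$, and computes explicitly that $R(u,v)\sigma = \bigl(\rho_u(f_v) - \rho_v(f_u) - f_{[u,v]}\bigr)\sigma$, which yields a concrete formula for the curvature restricted to $L$ as a byproduct. Your term-by-term argument, iterating $L$-compatibility on each of $\nabla_u\nabla_v w$, $\nabla_v\nabla_u w$, and $\nabla_{[u,v]}w$, is more direct and frame-free; in particular it does not use the triviality of the line bundle $L$ at all, so it would survive the relaxation of that hypothesis mentioned in the paper's remark following Definition \ref{def:CarrollianLieAlgebroid}. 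What you lose relative to the paper is only the explicit scalar expression for $R(u,v)|_L$ in terms of the functions $f_u$.
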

\begin{proof}
As the curvature $R$ is a tensor, it is sufficient to restrict attention to a chosen frame of $\Sec(L)$, which we denote as $\sigma$. Then as $\nabla$ is taken to be $L$-compatible, we can write $\nabla_u \sigma = f_u \, \sigma$ for the appropriate function $f_u \in C^\infty(M)$ associated with the section $u \in \Sec(A)$. Then a direct calculation gives
\begin{align*}
R(u,v)\sigma & = \nabla_u\big( \nabla_v \sigma\big)  -\nabla_v\big( \nabla_u \sigma\big)   -\nabla_{[u,v]}\sigma = \nabla_u(f_v\, \sigma)- \nabla_v(f_v\,\sigma ) - f_{[u,v]}\, \sigma\\
& = \rho_u(f_v) \sigma + f_v f_u \, \sigma - \rho_v(f_u)\sigma  - f_u f_v \, \sigma - f_{[u,v]}\, \sigma  = ( \rho_u(f_v)  - \rho_v(f_u)  - f_{[u,v]}) \, \sigma \in \Sec(L)\,,
\end{align*}
for all $u, v \in \Sec(A)$.
\end{proof}
\begin{proposition}
Let $\nabla$ be a $L$-compatible Lie algebroid connection on a Carrollian Lie algebroid $(A, [-,-], g , L)$. Then the torsion of $\nabla$ satisfies $T(u,v) \in \Sec(L)$ for all $u,v \in \Sec(L)$.
\end{proposition}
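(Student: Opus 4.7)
The plan is to unpack the definition of the torsion,
\[
T(u,v) = \nabla_u v - \nabla_v u - [u,v],
\]
and verify that each of the three terms on the right-hand side lies in $\Sec(L)$ whenever $u,v \in \Sec(L)$; then the result follows at once since $\Sec(L)$ is a $C^\infty(M)$-submodule of $\Sec(A)$.

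The two covariant-derivative terms are handled directly by the $L$-compatibility hypothesis. Since $\Sec(L) \subset \Sec(A)$, for $u,v \in \Sec(L)$ both pairs $(u,v)$ and $(v,u)$ satisfy the hypothesis of $L$-compatibility (the first argument is a section of $A$, the second a section of $L$), so $\nabla_u v \in \Sec(L)$ and $\nabla_v u \in \Sec(L)$. The bracket term is controlled by Proposition \ref{prop:LAlgebroid}, which states that $L \subset A$ is a Lie subalgebroid; this gives $[u,v] \in \Sec(L)$ for $u,v \in \Sec(L)$.

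Putting these three observations together, $T(u,v)$ is a $C^\infty(M)$-linear combination of sections of $L$ and therefore lies in $\Sec(L)$. There is no real obstacle here; the statement is essentially a bookkeeping consequence of the two structural facts already in hand (that $L$ is closed under the Lie algebroid bracket, and that $L$-compatible connections restrict to $L$ in the second slot). I would present the proof as a single short display computation with pointers to Proposition \ref{prop:LAlgebroid} and the definition of $L$-compatibility.
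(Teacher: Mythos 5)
Your proof is correct and follows the same route as the paper: both apply the $L$-compatibility hypothesis to get $\nabla_u v,\nabla_v u \in \Sec(L)$ and invoke Proposition \ref{prop:LAlgebroid} to close the bracket term in $\Sec(L)$. No issues.
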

\begin{proof}
As $\nabla$ is $L$-compatible, it follows that  $\nabla_u v \in \Sec(L)$ for all $u,v \in \Sec(L)$.  Furthermore, Proposition \ref{prop:LAlgebroid} tells us that $L$  is a Lie algebroid, and so $[u,v] \in \Sec(L)$ for all $u,v \in \Sec(L)$. Thus
$$T(u,v) =  \nabla_u v - \nabla_v u -[u,v] \in \Sec(L)\,,$$
for all $u,v \in \Sec(L)$.
\end{proof} 
\subsubsection*{Metric Compatible Connections}
We next address the existence of metric compatible connections on a Carrollian Lie algebroid. Due to the degeneracy of the metric, we cannot apply the Fundamental Theorem of Riemannian Lie algebroids. Moreover, the question of torsion-free metric connections will be addressed separately.  Nonetheless, we have the following proposition. 
\begin{proposition}\label{prop:ExMetricCon}
Metric compatible connections exist on any Carrollian Lie algebroid. 
\end{proposition}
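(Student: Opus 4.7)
The plan is to mimic the proof of Proposition~\ref{prop:ExLConn}: fix an arbitrary Lie algebroid connection $\nabla^{0}$ on $A$ (these exist by the general theory) and modify it by a $C^\infty(M)$-bilinear $(1,2)$-tensor $C$ that cancels the failure of metric compatibility. Setting $\nabla_{u} v := \nabla^{0}_{u} v + C(u,v)$, the condition $\nabla g = 0$ becomes
\begin{equation*}
g\bigl(C(u,v),w\bigr) + g\bigl(v,C(u,w)\bigr) = D(u,v,w),
\end{equation*}
where $D(u,v,w) := \rho_{u}(g(v,w)) - g(\nabla^{0}_{u}v,w) - g(v,\nabla^{0}_{u}w)$ is the defect tensor. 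A routine check shows that $D$ is $C^{\infty}(M)$-linear in all three slots and symmetric in $(v,w)$.

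The main obstacle is the degeneracy of $g$: the usual Riemannian contorsion argument requires inverting the flat map, which is unavailable here. To sidestep this, I would fix once and for all a smooth vector bundle splitting $A = L \oplus H$, which exists because $L$ is a subbundle (via a partition of unity, or an auxiliary positive-definite metric on $A$). By construction, $g|_{H}$ is non-degenerate, while $g(\sigma,\cdot\,) \equiv 0$ for every $\sigma \in \Sec(L)$. In particular any $L$-valued contribution to $C$ is invisible to $g$, so without loss of generality I take $C$ to be $H$-valued.

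Using the splitting, I construct $C$ in two complementary cases and extend by bilinearity under the decomposition $v = v_{L} + v_{H}$. First, for $v \in \Sec(L)$ the defect equation reduces to $g(C(u,v),w) = -g(\nabla^{0}_{u}v,w)$ for all $w$, which by the non-degeneracy of $g|_{H}$ uniquely prescribes $C(u,v) := -\mathrm{pr}_{H}(\nabla^{0}_{u}v)$, with $\mathrm{pr}_{H}:A\to H$ the projection. Second, for $v \in \Sec(H)$, the flat-sharp isomorphism of $g|_{H}$ allows one to define $C(u,v) \in \Sec(H)$ via the symmetric prescription
\begin{equation*}
g\bigl(C(u,v),w\bigr) := \tfrac{1}{2}\, D(u,v,w) \qquad \text{for all } w \in \Sec(H),
\end{equation*}
and the symmetry $D(u,v,w) = D(u,w,v)$ then immediately yields the required identity. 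Tensoriality of $C$ follows from that of $D$, $\mathrm{pr}_{H}$, and the splitting, so $\nabla := \nabla^{0} + C$ is a genuine Lie algebroid connection satisfying $\nabla g = 0$. The only delicate point is the global existence of the splitting $A = L \oplus H$, but this is standard for subbundles of finite rank on paracompact manifolds.
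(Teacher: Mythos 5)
Your proposal is correct, and it follows the same top-level strategy as the paper's proof: start from an arbitrary Lie algebroid connection $\nabla^0$, perturb by a $(1,2)$-tensor, and solve the resulting algebraic condition $g(C(u,v),w)+g(v,C(u,w))=(\nabla^0_u g)(v,w)$. Where you differ is in how solvability is established. The paper argues only that the degeneracy of $g$ leaves some components of the correction tensor unconstrained, so the linear system is underdetermined and hence solvable; strictly speaking, an underdetermined system need not be consistent, so this step is asserted rather than proved. You close that gap: by fixing a splitting $A=L\oplus H$ (which exists since $L$ is a subbundle over a paracompact base), observing that $g|_H$ is non-degenerate, and prescribing $C(u,v_L):=-\mathrm{pr}_H(\nabla^0_u v_L)$ together with $g(C(u,v_H),w):=\tfrac12 D(u,v_H,w)$ on $H$, you produce an explicit solution whose consistency follows from the symmetry of $D$ in its last two slots and the vanishing of $D(u,v_L,w_L)$. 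One small point worth making explicit: tensoriality of $v\mapsto C(u,v_L)$ uses $\mathrm{pr}_H\circ\iota_L=0$, since $\nabla^0_u(fv_L)=f\nabla^0_u v_L+\rho_u(f)v_L$ and the second term is killed by $\mathrm{pr}_H$. With that noted, your argument is a complete and more explicit version of the paper's, and it also exhibits concretely the non-uniqueness (any $L$-valued modification of $C$ is invisible to $g$) that the paper mentions parenthetically.
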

\begin{proof}
Lie algebroid connections always exist on a Lie algebroid, and so we select one $\nabla^0$. The non-metricity is measured by the following Lie algebroid tensor
$$\big(\nabla^0_u g\big )(v,w) = \rho_u \big(g(v,w) \big) - g(\nabla^0_u v,w) -  g( v ,\nabla^0_u w)\,.$$
We now define a new connection given by $\nabla_u v := \nabla^0_u v + \Gamma_A(u,v)$, here $\Gamma_A$ is a Lie algebroid $(1,2)$-tensor.  Imposing the metric compatibility condition on $\nabla$ forces an algebraic constraint on $\Gamma_A$, which we will examine. Specifically,
$$\rho_u\big( g(v,w)\big) = g(\nabla^0_u v ,w) + g( \Gamma_A(u,v) ,w) +g( v, \nabla^0_u w ) +  g( v, \Gamma_A(u,w) )\,.$$ 
Using the non-metricity of $\nabla^0$, the algebraic condition 
 $$\big(\nabla^0_u g \big)(v,w) = g(\Gamma_A(u,v),w) + g( v, \Gamma_A(u,w) )\,,$$
is deduced. The degeneracy of the metric $g$ does not fully constrain the components of the Lie algebroid tensor $\Gamma_A$; there is freedom in choosing components of $\Gamma_A$ that lie in the kernel of $g$. The number of components of $\Gamma_A$ is greater than the number of independent equations. The system of linear equations is underdetermined, and so a solution can always be found (there are, in fact, an infinite number of solutions). Thus, a metric compatible Lie algebroid connection can always be constructed from an arbitrary  connection, and so the theorem is established.
 \end{proof}
\subsubsection*{Carrollian Connections}
We now address the question of Carrollian connections, i.e., connections that are both $L$-compatible and metric compatible.  To do this, we first make the following observation.
\begin{lemma}\label{lem:MetImpL}
Let $\nabla$ be a metric compatible connection on a Carrollian Lie algebroid $(A, [-,-], \rho, L)$. Then $\nabla$ is also $L$-compatible, and so is a Carrollian connection.
\end{lemma}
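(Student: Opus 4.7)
The plan is to exploit the defining identity $\Sec(L) = \ker(g)$ and the metric compatibility equation to show directly that $\nabla_u v$ lies in the kernel of $g$ whenever $v$ does. The statement that $\nabla_u v \in \Sec(L)$ is equivalent to the vanishing condition $g(\nabla_u v, w) = 0$ for all $w \in \Sec(A)$, so the whole argument reduces to a one-line manipulation of the metric compatibility identity.

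First I would fix arbitrary sections $u, w \in \Sec(A)$ and $v \in \Sec(L)$, and invoke metric compatibility in the rearranged form
\begin{equation*}
g(\nabla_u v, w) \;=\; \rho_u\bigl(g(v,w)\bigr) \,-\, g(v, \nabla_u w).
\end{equation*}
Next, since $v \in \Sec(L) = \ker(g)$, the function $g(v,w)$ is identically zero on $M$, and therefore its anchor-derivative $\rho_u(g(v,w))$ vanishes; similarly the second term $g(v,\nabla_u w)$ vanishes because $v \in \ker(g)$. Hence $g(\nabla_u v, w) = 0$ for every $w \in \Sec(A)$, so $\nabla_u v \in \ker(g) = \Sec(L)$, which is exactly $L$-compatibility. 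Combined with the metric compatibility hypothesis, this makes $\nabla$ a Carrollian connection by definition.

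There is essentially no obstacle: the argument is a two-term cancellation driven entirely by the identification $\Sec(L) = \ker(g)$, which is the defining feature of a Carrollian Lie algebroid (as opposed to a generic semi-Riemannian Lie algebroid with a chosen rank-$1$ subbundle). The only subtlety worth flagging is that the proof uses the full strength of this identification in both directions: the inclusion $\Sec(L) \subseteq \ker(g)$ is used to kill the two right-hand terms, while the reverse inclusion $\ker(g) \subseteq \Sec(L)$ is used to conclude $\nabla_u v \in \Sec(L)$. Combining this lemma with Proposition \ref{prop:ExMetricCon} then immediately yields the existence of Carrollian connections on any Carrollian Lie algebroid.
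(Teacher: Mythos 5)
Your proof is correct and is essentially identical to the paper's own argument: both evaluate the metric compatibility identity with one slot occupied by a section of $L=\ker(g)$, observe that two of the three terms vanish, and conclude that the covariant derivative lies in $\ker(g)=\Sec(L)$. The only cosmetic difference is that the paper places the $L$-section in the last slot and isolates $g(v,\nabla_u w)$, whereas you place it in the middle slot and isolate $g(\nabla_u v,w)$; by symmetry of $g$ these are the same computation.
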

\begin{proof}
The metric compatibility condition  $\rho_u\big( g(v,w)\big) = g(\nabla_u v, w) + g(v, \nabla_u w)$ implies that if $w \in \Sec(L)$, then  $g(v, \nabla_u w) =0$ for all $u,v \in \Sec(A)$. Thus, $\nabla_u w \in \ker(g) = \Sec(L)$ and so $\nabla$ is $L$-compatible.
\end{proof}
\begin{remark}
While metric compatibility implies $L$-compatibility, the converse is not true.
\end{remark}
\begin{theorem} \label{trm:CarrConExist}
Carrollian connections exist on any Carrollian Lie algebroid.
\end{theorem}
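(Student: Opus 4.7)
The plan is to combine the two preceding results --- Proposition \ref{prop:ExMetricCon} and Lemma \ref{lem:MetImpL} --- which together yield the theorem almost immediately. The degeneracy of the metric $g$ actually works in our favour here: while it obstructs a Levi--Civita-type uniqueness result, it forces $L$-compatibility to follow automatically from metric compatibility, so the two conditions need not be imposed simultaneously and no new construction is required.

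First, I would invoke Proposition \ref{prop:ExMetricCon} to produce a metric compatible Lie algebroid connection $\nabla$ on $A$. Recall that in that proof one selects any Lie algebroid connection $\nabla^0$, writes $\nabla_u v := \nabla^0_u v + \Gamma_A(u,v)$, and uses that the algebraic system imposed by metric compatibility on the Lie algebroid $(1,2)$-tensor $\Gamma_A$ is underdetermined because of $\ker(g) \ne 0$, so solutions always exist.

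Next, I would apply Lemma \ref{lem:MetImpL} to this $\nabla$. For any $u \in \Sec(A)$ and $w \in \Sec(L) = \ker(g)$, metric compatibility gives
$$g(v, \nabla_u w) = \rho_u\bigl(g(v,w)\bigr) - g(\nabla_u v, w) = 0$$
for every $v \in \Sec(A)$, since both $g(v,w) = 0$ and $g(\nabla_u v, w) = 0$. Hence $\nabla_u w \in \ker(g) = \Sec(L)$, which is exactly $L$-compatibility. Combining both properties, $\nabla$ is a Carrollian connection.

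I do not anticipate any genuine obstacle in carrying out this plan; the real content has already been deposited in the preceding proposition and lemma, and the proof of the theorem reduces to a one-line invocation of both. The only point worth flagging in the write-up is that the argument is not circular: Proposition \ref{prop:ExMetricCon} is established without any reference to $L$-compatibility, and Lemma \ref{lem:MetImpL} then delivers $L$-compatibility as a consequence rather than as an input.
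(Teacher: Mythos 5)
Your proposal is correct and is essentially identical to the paper's proof: the paper also derives the theorem directly from Proposition \ref{prop:ExMetricCon} together with Lemma \ref{lem:MetImpL}. The explicit verification you include of the lemma's argument matches the paper's own proof of that lemma.
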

\begin{proof}
The theorem follows directly from Lemma \ref{lem:MetImpL} and the existence of metric compatible connections established in Proposition \ref{prop:ExMetricCon}.
\end{proof}
\begin{remark}
Theorem \ref{trm:CarrConExist} is the direct analogue of the Fundamental Theorem of Riemannian Lie algebroids. However, due to the degeneracy of the metric, unlike Levi-Civita connections, Carrollian connections are not unique.
\end{remark} 
For framed Carrollian Lie algebroids (see Definition \ref{def:StatFra}), it is natural to insist that the fixed frame $\sigma \in \Sec(L)$ is parallel with respect to a Carrollian connection; this is not automatic from the definition.  The question of the existence of such connections is analogous to the question of finding a metric connection on a Riemannian manifold that renders a (nowhere vanishing)  vector field parallel; such a connection can always be constructed by modifying the Levi-Civita connection.  The proof of Proposition \ref{prop:ExLConn} establishes that any framed Carrollian Lie algebroid can be equipped with an $L$-compatible connection such that $\nabla_u \sigma =0$ for all $u \in \Sec(A)$. We will use this in the proof of the following.
\begin{theorem}\label{trm:CarConFramed}
For any  framed Carrollian Lie algebroid, Carrollian connections such that the frame is parallel always exist.
\end{theorem}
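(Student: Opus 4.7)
The plan is to start from a Carrollian connection and modify it by a well-chosen $\Sec(L)$-valued tensor so that the frame becomes parallel, while ensuring the modification preserves metric compatibility. The key observation is that since $\sigma \in \ker(g)$, any correction tensor taking values in $\Sec(L)$ is automatically $g$-skew in the appropriate sense, and hence does not spoil metric compatibility.

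First, by Proposition \ref{prop:ExMetricCon} and Lemma \ref{lem:MetImpL}, pick a Carrollian connection $\nabla^0$ on $A$; in particular $\nabla^0$ is $L$-compatible, so $\nabla^0_u \sigma \in \Sec(L)$ for every $u \in \Sec(A)$. Writing sections of $L$ in the frame $\sigma$, there is a unique function assignment $u \mapsto \omega^0(u) \in C^\infty(M)$ such that
\begin{equation*}
\nabla^0_u \sigma \;=\; \omega^0(u)\,\sigma.
\end{equation*}
The relation $\nabla^0_{fu}\sigma = f\,\nabla^0_u\sigma$ shows $\omega^0$ is $C^\infty(M)$-linear in $u$, hence $\omega^0 \in \Sec(A^*)$. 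Next, since $\sigma$ is nowhere vanishing, it can be extended to a local frame of $A$, and by a standard partition-of-unity argument the dual element can be assembled into a global section $\omega \in \Sec(A^*)$ satisfying $\omega(\sigma) = 1$.

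Now define
\begin{equation*}
\nabla_u v \;:=\; \nabla^0_u v \;-\; \omega^0(u)\,\omega(v)\,\sigma,
\qquad u,v \in \Sec(A).
\end{equation*}
The correction term is $C^\infty(M)$-linear in both $u$ and $v$, so it is a Lie algebroid $(1,2)$-tensor; consequently $\nabla$ is again a Lie algebroid connection. A direct check gives $\nabla_u \sigma = \omega^0(u)\sigma - \omega^0(u)\cdot 1 \cdot \sigma = 0$, so the frame is parallel. To verify that metric compatibility is inherited from $\nabla^0$, compute
\begin{align*}
g(\nabla_u v, w) + g(v, \nabla_u w)
&= g(\nabla^0_u v, w) + g(v, \nabla^0_u w) \\
&\quad - \omega^0(u)\,\omega(v)\,g(\sigma, w) - \omega^0(u)\,\omega(w)\,g(v, \sigma) \\
&= \rho_u\big(g(v,w)\big),
\end{align*}
using that $g(\sigma,-) = 0$ because $\sigma \in \ker(g)$, and the metric compatibility of $\nabla^0$. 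Hence $\nabla g = 0$, and by Lemma \ref{lem:MetImpL} this metric compatible connection is automatically $L$-compatible, so $\nabla$ is Carrollian.

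The only point that requires any care is producing the section $\omega \in \Sec(A^*)$ with $\omega(\sigma) = 1$; since $\sigma$ is a nowhere-vanishing section of the line subbundle $L$, one may split $A \cong L \oplus E$ (for instance via any auxiliary fibrewise inner product) and take $\omega$ to be the dual of $\sigma$ on the $L$ summand, extended by zero on $E$. Everything else reduces to the basic fact that modifications of a connection by a tensor remain connections, and that corrections valued in $\ker(g)$ are invisible to the metric compatibility equation --- this is precisely the flexibility highlighted in the proof of Proposition \ref{prop:ExMetricCon}.
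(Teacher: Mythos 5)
Your proof is correct, but it reverses the order of the two modifications relative to the paper's argument, and this is a genuine (and arguably cleaner) difference. The paper starts from an arbitrary connection, first makes the frame parallel via the construction of Proposition \ref{prop:ExLConn} (obtaining $\nabla^1$ with $\nabla^1_u\sigma=0$), and only then corrects by a tensor $\Gamma_A$ to achieve metric compatibility; the delicate point there is showing that the extra constraint $\Gamma_A(u,\sigma)=0$ is consistent with the algebraic system for $\Gamma_A$, which the paper does by observing $(\nabla^1_u g)(\sigma,w)=0$ forces $\Gamma_A(u,\sigma)\in\ker(g)$ and that kernel-valued components are unconstrained. You instead start from an already metric-compatible (hence, by Lemma \ref{lem:MetImpL}, Carrollian) connection and add the explicit $\Sec(L)$-valued correction $-\omega^0(u)\,\omega(v)\,\sigma$ to kill $\nabla_u\sigma$; since the correction lands in $\ker(g)$, metric compatibility is preserved with no consistency check needed. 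Both arguments exploit the same underlying flexibility --- corrections valued in $\ker(g)$ are invisible to the metric-compatibility equation --- but your ordering makes the final step a one-line verification and produces a fully explicit correction tensor, at the cost of invoking the existence of a Carrollian connection (Theorem \ref{trm:CarrConExist}) as input rather than building everything from an arbitrary connection. Your care in extending $\omega$ to a global section of $A^*$ with $\omega(\sigma)=1$ via a splitting $A\cong L\oplus E$ is also slightly more explicit than the paper's treatment.
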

\begin{proof}
Starting from an arbitrary Lie algebroid connection $\nabla^0$ on $A$, the proof of Proposition \ref{prop:ExLConn} shows that a $L$-compatible connection of the form
$$\nabla^1_u v :=   \nabla^0_u v -  \big(\nabla^0_u \sigma\big)\, \omega(v)\,,$$
can always be constructed using the frame $\sigma \in\Sec(L)$. Moreover, it is clear that the frame is parallel, i.e., $\nabla_u \sigma =0$ for all $u \in \Sec(A)$. The proof of Proposition \ref{prop:ExMetricCon} allows us to  amend $\nabla^1$ to obtain a metric compatible connection given by
$$\nabla_u v :=  \nabla^1_u v + \Gamma_A(u,v)\,.$$
We now need to impose the parallel condition to further constrain $\Gamma_A$.  Directly, $\nabla_u \sigma =  \nabla^1_u \sigma + \Gamma_A(u,\sigma) =0$, which implies 
$$\Gamma_A(u,\sigma)=0\,,$$
for all $u \in \Sec(A)$. We now demonstrate that this additional constraint is consistent with the metric compatibility condition. The non-metricity of $\nabla^1$,
$$(\nabla^1_u g)(v,w) = \rho_u \big( g( v,w )\big) -  g(\nabla^1_u v, w) -  g( v , \nabla^1_v w )\,,$$ 
together with the algebraic condition on $\Gamma_A$,
$$(\nabla^1_u g)(v,w) =  g(\Gamma_A(u,v), w ) +  g(v,\Gamma_A(u,w))\,,$$ 
implies the following:
$$(\nabla^1_u g)(\sigma,w) = - g( \nabla^1_u \sigma , w)  =  g(\Gamma_A(u,\sigma), w ) =0 \,,$$
as $\nabla^1_u \sigma =0$ by construction. Thus, as $g(\Gamma_A(u,\sigma), w ) =0$,  it must be the case that $\Gamma_A(u,\sigma) \in \ker(g)$ for all $u \in \Sec(A)$. The proof of Proposition \ref{prop:ExMetricCon} shows that the components of $\Gamma_A$ that lie in the kernel of the metric are not constrained by the metric compatibility. Thus, we can consistently choose $\Gamma_A(u,\sigma) =0$.  This establishes the result. 
\end{proof}
A further natural condition on a Carrollian connection on a Carrollian Lie algebroid is for it to be torsion-free, i.e., $T(u,v) =0$ for all $u,v \in \Sec(A)$. However, in general, the requirement for a connection to be metric compatible and torsion-free may be in conflict for degenerate metrics. Thus, while there are an infinite number of Carrollian connections on an arbitrary Carrollian Lie algebroid, the existence of torsion-free Carrollian connections places a strong constraint on the Carrollian Lie algebroid. The following proposition is already known in the context of Carrollian manifolds (see \cite{Ciambelli:2024}, and  \cite{Kozlov:2001} for a more general setting).
\begin{proposition}\label{prop:TorMeansStat}
If a Carrollian connection is torsion-free, then the Carrollian Lie algebroid is a stationary Carrollian Lie algebroid  (see Definition \ref{def:StatFra}).
\end{proposition}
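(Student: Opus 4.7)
The plan is to show directly that every section $\sigma \in \Sec(L)$ satisfies $\mathcal{L}_\sigma g = 0$ by expanding the Lie derivative, using the torsion-free condition to trade the Lie bracket for covariant derivatives, then exploiting metric compatibility and $L$-compatibility to collapse all surviving terms to zero. One could also invoke the earlier proposition stating that it suffices to verify the Killing condition on a single nowhere vanishing section, but the argument below works uniformly for every section of $L$, so that shortcut is not needed.

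First I would pick arbitrary $\sigma \in \Sec(L)$ and $v,w \in \Sec(A)$, and write out
\begin{equation*}
(\mathcal{L}_\sigma g)(v,w) = \rho_\sigma\!\bigl(g(v,w)\bigr) - g([\sigma,v],w) - g(v,[\sigma,w]).
\end{equation*}
Next, using $T \equiv 0$, I replace $[\sigma,v] = \nabla_\sigma v - \nabla_v \sigma$ and $[\sigma,w] = \nabla_\sigma w - \nabla_w \sigma$. Grouping the terms with $\nabla_\sigma$, the bracket $\rho_\sigma(g(v,w)) - g(\nabla_\sigma v, w) - g(v, \nabla_\sigma w)$ is precisely $(\nabla_\sigma g)(v,w)$, which vanishes by metric compatibility. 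What remains is
\begin{equation*}
(\mathcal{L}_\sigma g)(v,w) = g(\nabla_v \sigma, w) + g(v, \nabla_w \sigma).
\end{equation*}

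Finally, $L$-compatibility of the connection forces $\nabla_v \sigma, \nabla_w \sigma \in \Sec(L) = \ker(g)$, so both terms on the right-hand side are zero. Hence $\mathcal{L}_\sigma g = 0$ for every $\sigma \in \Sec(L)$, which is exactly the stationary condition.

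I do not expect any real obstacle: the argument is a three-line manipulation once the definitions are unpacked, and the role of each hypothesis is transparent (torsion-freeness converts the Lie bracket into covariant derivatives, metric compatibility kills the $\nabla_\sigma$-piece, and $L$-compatibility kills the $\nabla_v \sigma$ and $\nabla_w \sigma$ pieces). The only thing worth flagging is that both $L$-compatibility and the fact that $\ker(g) = \Sec(L)$ are used essentially — weakening either would leave residual terms — which is consistent with the fact that stationarity fails for generic Carrollian connections with torsion.
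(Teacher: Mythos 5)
Your proof is correct and is essentially the same argument as the paper's: the paper first records the general identity $(\mathcal{L}_u g)(v,w) = (\nabla_u g)(v,w) + g(T(u,v),w) + g(v,T(u,w)) - g(\nabla_v u,w) - g(v,\nabla_w u)$ (equation \eqref{eqn:LieDerCon}) and then specialises to the torsion-free, metric-compatible case with $u\in\Sec(L)$, which is exactly the computation you carry out directly. The only cosmetic difference is that you derive the torsion-free instance of that identity inline rather than quoting it with the torsion terms included (and your signs on the residual $g(\nabla_v\sigma,w)+g(v,\nabla_w\sigma)$ terms are the correct ones).
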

\begin{proof}
A short calculation using the non-metricity of a connection and the definition of torsion leads to
\begin{equation}\label{eqn:LieDerCon}
(\mathcal{L}_u g)(v,w) = (\nabla_ug)(v,w) + g\big(T(u,v),w\big )  + g\big(v, T(u,w) \big) - g \big( \nabla_v u, w \big) - g \big(v, \nabla_w u\big)\,,
\end{equation}
for all $u,v,w \in \Sec(A)$. Assuming the connection is Carrollian and torsion-free means 
$$(\mathcal{L}_ug)(v,w) =  - g \big( \nabla_v u, w \big) - g \big(v, \nabla_w u\big)\,.$$
Then, taking $u \in \Sec(L)$, we observe that both $\nabla_v u$ and  $\nabla_w u$ are sections of $L$, and so in the kernel of $g$. Thus, $(\mathcal{L}_ug)(v,w) =0$, for all $v,w \in \Sec(A)$; that is, $\mathcal{L}_ug =0$, and so we have a stationary Carrollian Lie algebroid.
\end{proof}
\begin{proposition}\label{prop:StatTorFreCon}
Any stationary Carrollian Lie algebroid can be equipped with a torsion-free  Carroll connection.
\end{proposition}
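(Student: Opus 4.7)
My plan is to obtain the desired connection by correcting a metric compatible Lie algebroid connection $\nabla^0$ (which exists by Proposition \ref{prop:ExMetricCon}) with a $(1,2)$-tensor $C$, setting $\nabla := \nabla^0 + C$. Writing $T^0$ for the torsion of $\nabla^0$, the torsion-free condition on $\nabla$ together with preservation of metric compatibility translate into the pair
\[
C(u,v) - C(v,u) = -T^0(u,v),\qquad g(C(u,v),w) + g(v,C(u,w)) = 0,
\]
for all $u,v,w \in \Sec(A)$. Applying the standard cyclic permutation argument (as in the proof of the Fundamental Theorem of Riemannian Lie algebroids) to this pair yields the contorsion-type formula
\[
g(C(u,v),w) \;=\; \tfrac{1}{2}\bigl[\, g(T^0(u,w),v) + g(T^0(v,w),u) - g(T^0(u,v),w) \,\bigr],
\]
whose right-hand side is manifestly $C^\infty(M)$-linear in each argument.

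The hard part will be solvability: because $g$ is degenerate, the formula only determines $C(u,v)$ modulo $\ker(g) = \Sec(L)$, and in fact admits a solution $C(u,v) \in \Sec(A)$ only when the right-hand side, viewed as a functional in $w$, annihilates $\Sec(L)$. Substituting $w = \sigma \in \Sec(L)$, the third term drops out because $g(-,\sigma) = 0$. For the remaining two terms, $L$-compatibility of $\nabla^0$ (a consequence of metric compatibility by Lemma \ref{lem:MetImpL}) ensures $\nabla^0_u\sigma \in \Sec(L)$, and the identity $g(\nabla^0_\sigma u,v) + g(u,\nabla^0_\sigma v) = \rho_\sigma g(u,v)$ coming from $\nabla^0 g = 0$ allows a short calculation to collapse the expression to $-\tfrac{1}{2}(\mathcal{L}_\sigma g)(u,v)$. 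The stationarity hypothesis states precisely that $\mathcal{L}_\sigma g = 0$ for every $\sigma \in \Sec(L)$, so the solvability obstruction vanishes. This step is the heart of the proof, and it explains why Proposition \ref{prop:TorMeansStat} displays stationarity as a necessary condition.

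To promote pointwise solvability into a globally smooth, $C^\infty(M)$-bilinear choice of $C$, I would equip $A$ with an auxiliary Riemannian metric $h$, constructed by a partition of unity, and let $L^\perp \subset A$ denote the $h$-orthogonal complement of $L$. Since $h$ is positive-definite and $L$ is a line bundle, $L \cap L^\perp = 0$; combined with $\ker(g) = \Sec(L)$, this forces $g$ to restrict to a non-degenerate form on $L^\perp$. The contorsion equation therefore admits a unique solution $C(u,v) \in \Sec(L^\perp)$, depending $C^\infty(M)$-bilinearly on $(u,v)$ because the right-hand side is already $C^\infty(M)$-linear in every slot. Setting $\nabla := \nabla^0 + C$ then yields a Lie algebroid connection which by construction is torsion-free and metric compatible, while Lemma \ref{lem:MetImpL} automatically upgrades it to a Carrollian connection.
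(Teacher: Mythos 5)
Your overall strategy --- correcting a metric compatible connection by a $(1,2)$-tensor and locating the solvability obstruction in $\mathcal{L}_\sigma g$ --- is essentially the paper's, and your explicit contorsion formula together with the computation $g(C(u,v),\sigma) = -\tfrac{1}{2}(\mathcal{L}_\sigma g)(u,v)$ (which I have checked and is correct, using $L$-compatibility to kill the $g(\nabla^0_u\sigma,v)$ terms) makes concrete what the paper only asserts, namely that stationarity is exactly the consistency condition for the underdetermined system. However, there is a genuine gap in your final step. The cyclic-permutation argument shows that the two conditions $C(u,v)-C(v,u)=-T^0(u,v)$ and $g(C(u,v),w)+g(v,C(u,w))=0$ \emph{imply} the contorsion formula; because $g$ is degenerate, the converse only partially holds. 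If you define $C(u,v)\in\Sec(L^\perp)$ by your formula, then the metric condition holds exactly (the part of the right-hand side symmetric under $v\leftrightarrow w$ cancels by antisymmetry of $T^0$), but for the torsion condition you only recover $g\bigl(C(u,v)-C(v,u)+T^0(u,v),\,w\bigr)=0$ for all $w$, i.e.\ $C(u,v)-C(v,u)+T^0(u,v)\in\Sec(L)$. With $C$ valued in $L^\perp$, the torsion of $\nabla=\nabla^0+C$ is precisely the $L$-component of $T^0$ in the splitting $A=L^\perp\oplus L$, which has no reason to vanish. So ``by construction is torsion-free'' is not justified as written.

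The repair is one line: set $\tilde T(u,v):=T^0(u,v)+C(u,v)-C(v,u)\in\Sec(L)$ and replace $C$ by $C':=C-\tfrac{1}{2}\tilde T$. Since $\tilde T$ is antisymmetric, $C'(u,v)-C'(v,u)=-T^0(u,v)$ exactly, and since $\tilde T$ takes values in $\ker(g)$ the correction leaves the metric compatibility condition untouched. With this addition your argument is complete, and arguably more transparent than the paper's proof, which starts from a Carrollian rather than merely metric compatible connection and asserts, rather than exhibits, a solution of the underdetermined system.
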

\begin{proof}
Let $\nabla^0$ be an arbitrary Carroll connection on a stationary Carrollian Lie algebroid; the existence of such a connection is guaranteed by Theorem \ref{trm:CarrConExist}. We then define $\nabla_u v := \nabla^0_u v + \Gamma_A(u,v)$.  We now impose the torsion-free and Carrollian (metric compatible) conditions
\begin{subequations}
    \begin{align}
  \label{eqn:TorCon1} & \Gamma_A(u,v) - \Gamma_A(v,u) = - T^0(u,v)\,,\\
  \label{eqn:TorCon2}  & g (\Gamma_A(u,v), w) + g(v, \Gamma_A(u,w)) =0\,,
\end{align}
for all $u,v,w \in \Sec(A)$. We need to argue that the stationary condition implies that both the torsion-free and Carrollian conditions can simultaneously be solved.  Assuming $u \in \Sec(L)$, \eqref{eqn:LieDerCon} directly implies 
\begin{equation}\label{eqn:TorCon3}
g(T^0(u,v), w) + g(v, T^0(u,w))=0\,.
\end{equation}
\end{subequations}
Equation \eqref{eqn:TorCon2}  then tells us that $\Gamma_A(u,v) \in \ker(g)$ if either $u$ or $v$ are sections of $L$. Thus, due to the degeneracy of the metric, Equations \eqref{eqn:TorCon1} and \eqref{eqn:TorCon2} form an underdetermined system. However, we still have  consistency requirements on the initial torsion $T^0$.  Via Proposition \ref{prop:TorMeansStat}, the stationary condition is precisely the algebraic constraint, given by \eqref{eqn:TorCon3}, that guarantees the system of equations \eqref{eqn:TorCon1} and \eqref{eqn:TorCon2} is consistent and admits at least one solution $\Gamma_A$.
\end{proof}
Putting the Proposition \ref{prop:TorMeansStat} and Proposition \ref{prop:StatTorFreCon} together, we arrive at the following.
\begin{theorem}\label{trm:TorFreeCon}
A Carrollian Lie algebroid admits a torsion-free Carrollian connection if and only if it is stationary.
\end{theorem}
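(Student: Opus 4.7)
The plan is to establish the biconditional by invoking the two preceding propositions, which together already contain all the analytical content. Since Theorem \ref{trm:TorFreeCon} is essentially the packaging of Proposition \ref{prop:TorMeansStat} and Proposition \ref{prop:StatTorFreCon} into a single characterisation, the ``proof'' is primarily organisational; however, I would lay out explicitly what each direction requires in order to make the equivalence transparent to the reader.

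For the forward implication, I would suppose that a torsion-free Carrollian connection $\nabla$ exists on $(A, [-,-], \rho, g, L)$. The key identity to recall is the decomposition of the Lie derivative of $g$,
\begin{equation*}
(\mathcal{L}_u g)(v,w) = (\nabla_u g)(v,w) + g\big(T(u,v),w\big)  + g\big(v, T(u,w)\big) - g\big( \nabla_v u, w \big) - g\big(v, \nabla_w u\big),
\end{equation*}
which was already verified in the proof of Proposition \ref{prop:TorMeansStat}. Setting $T=0$ and $\nabla g =0$ and restricting $u \in \Sec(L)$ kills every remaining term because $L$-compatibility (which is automatic for metric compatible connections by Lemma \ref{lem:MetImpL}) places $\nabla_v u$ and $\nabla_w u$ into $\Sec(L)=\ker(g)$. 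Hence $\mathcal{L}_u g = 0$ for all $u\in\Sec(L)$, so the Carrollian Lie algebroid is stationary in the sense of Definition \ref{def:StatFra}.

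For the reverse implication, I would appeal directly to Proposition \ref{prop:StatTorFreCon}, which constructs a torsion-free Carrollian connection on any stationary Carrollian Lie algebroid by modifying an existing Carroll connection $\nabla^0$ via a $(1,2)$-tensor $\Gamma_A$ satisfying an underdetermined algebraic system whose consistency is exactly the stationarity condition \eqref{eqn:TorCon3}. Combining the two implications yields the biconditional.

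No step here poses a real obstacle, since the preliminary propositions have already absorbed the two substantive calculations: the identity for $\mathcal{L}_u g$ in terms of non-metricity and torsion, and the algebraic consistency check that stationarity permits simultaneously solving metric compatibility and the torsion-free condition despite the degeneracy of $g$. The only subtlety worth flagging in the write-up is the uniqueness question: unlike the Levi-Civita case, the constructed torsion-free Carrollian connection is far from unique, because the kernel directions of $g$ leave $\Gamma_A$ genuinely underdetermined. This should be stated as a remark after the theorem rather than included in the proof itself.
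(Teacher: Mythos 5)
Your proposal is correct and matches the paper exactly: the paper derives Theorem \ref{trm:TorFreeCon} by simply combining Proposition \ref{prop:TorMeansStat} (torsion-free Carrollian implies stationary) with Proposition \ref{prop:StatTorFreCon} (stationary implies existence of a torsion-free Carroll connection). Your restatement of the forward-direction calculation and your remark on non-uniqueness are accurate but add nothing beyond what those propositions and the paper's subsequent remarks already contain.
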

As a weak Carrollian manifold $(M, g, \kappa)$ can be interpreted as a Carrollian Lie algebroid structure on $\sT M$. A strong Carrollian manifold is a weak Carrollian manifold equipped with a Carrollian connection (we will relax the torsion-free condition). Thus, we are led to the following via Theorem \ref{trm:CarrConExist}, Theorem \ref{trm:CarConFramed}, and Theorem \ref{trm:TorFreeCon}.
\begin{corollary}
Any weak Carrollian manifold $(M, g, \kappa)$ can be equipped with a Carrollian connection. Moreover, Carrollian connections such that the Carroll vector field $\kappa$ is parallel can always be constructed. In other words, any weak Carrollian manifold can be made a strong Carrollian manifold. Furthermore, Carrollian connections with vanishing torsion only exist on Carrollian manifolds if and only if the degenerate metric is stationary.
\end{corollary}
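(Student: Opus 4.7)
The plan is to read the corollary as the specialisation of the three existence theorems just proved to the particular Carrollian Lie algebroid built out of the tangent bundle. First I would unpack the identification: given a weak Carrollian manifold $(M,g,\kappa)$, take $A := \sT M$ with its standard Lie algebroid structure (the Lie bracket of vector fields together with the identity anchor $\rho = \mathrm{id}$), let $L := \ker(g) \subset \sT M$, and fix the frame $\sigma := \kappa \in \Sec(L)$. This is exactly the Carrollian Tangent Algebroids example from Subsection \ref{subsec:Examples}, and it is framed in the sense of Definition \ref{def:StatFra}. Crucially, since $\rho = \mathrm{id}$, a Lie algebroid connection on $A$ is exactly an affine connection on $M$, and the notions of metric compatibility, $L$-compatibility, and torsion-freeness collapse to the classical ones.

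Once this identification is in place, each sentence of the corollary is a direct corollary of a theorem in Subsection \ref{subsec:Connections}. The existence of a Carrollian connection follows from Theorem \ref{trm:CarrConExist} applied to this framed Carrollian Lie algebroid, which yields an affine connection $\nabla$ on $M$ with $\nabla g = 0$ and with $\ker(g)$ preserved under parallel transport. The parallelism of $\kappa$ follows from Theorem \ref{trm:CarConFramed}, since the construction there produces a Carrollian connection for which the chosen frame $\sigma = \kappa$ satisfies $\nabla_X \kappa = 0$ for all $X \in \Vect(M)$; this is exactly what is meant by the weak Carrollian structure being promoted to a strong one. Finally, the torsion-free statement is an immediate application of Theorem \ref{trm:TorFreeCon}: a torsion-free Carrollian connection exists on $(M,g,\kappa)$ if and only if the underlying Carrollian Lie algebroid is stationary, i.e., every section of $L$ is Killing for $g$, which is the condition that the degenerate metric is stationary in the classical sense.

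There is essentially no new obstacle here; the work has already been done in Theorems \ref{trm:CarrConExist}, \ref{trm:CarConFramed}, and \ref{trm:TorFreeCon}. The only point to check carefully is the dictionary between the two settings, most notably that $L$-compatibility of $\nabla$ as a Lie algebroid connection on $\sT M$ is automatically implied by metric compatibility via Lemma \ref{lem:MetImpL}, so the resulting object on $M$ is genuinely a Carrollian affine connection in the classical sense. Thus the proof reduces to writing one short paragraph: identify the framed Carrollian Lie algebroid, invoke the three theorems in order, and translate back to the language of Carrollian manifolds.
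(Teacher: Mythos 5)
Your proposal is correct and follows essentially the same route as the paper: the corollary is obtained by viewing $(M,g,\kappa)$ as the (framed) Carrollian Lie algebroid on $\sT M$ with identity anchor and then specialising Theorem \ref{trm:CarrConExist}, Theorem \ref{trm:CarConFramed}, and Theorem \ref{trm:TorFreeCon}. The paper records exactly this reduction (in the sentence preceding the corollary) and offers no further argument, so nothing is missing from your account.
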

It is important to note that Carrollian connections are not completely determined by the Carrollian structure on a Carrollian Lie algebroid; this is a general feature of metric compatible connections for degenerate metrics. Thus, in order to define geodesic motion  compatible with the Carrollian structure, a Carrollian connection needs to be posited. Furthermore, only on a stationary Carrollian Lie algebroid can one consider Carrollian connections that are torsion-free. Thus, on non-stationary Carrollian Lie algebroids, one must choose between a Carrollian connection or a torsion-free connection. 
\begin{example}
Let $(A_0 , [-,-]_{A_0}, \rho_{g_0}, g_{A_0})$ be a Riemannian Lie algebroid and let $(L, [-,-]_L, \rho_L)$ be a rank-1 Lie algebroid over the same base manifold. We take $L \rightarrow M$  to be a trivial line bundle.  The direct sum $A := A_0 \oplus L$ is a Carrollian Lie algebroid where $g(u + \psi, v + \zx) := g_{A_0}(u,v)$, and clearly $\ker(g) = \Sec(L)$. As $A_0$ is a Riemannian Lie algebroid, it comes canonically equipped with a Levi-Civita connection which we denote as $\nabla^{LC}$.  The trivial line bundle $L$ similarly comes equipped with the canonical trivial connection, which we denote as $\nabla^{L}$. We then define the direct sum of connections as 
$$\nabla^0_{(u+\psi)}(v+\zx) :=  \nabla^{LC}_u v + \nabla^L_\psi \zx\,.$$
By viewing sections of  $L$ as $\psi = 0 + \psi$, it is clear that $\nabla^0$ is a $L$-compatible connection. Directly checking the metric compatibility condition, using  $\ker(g) = \Sec(L)$, we arrive at
$$ \big( \nabla^0_{u + \psi}g\big)(v+ \zx, w+ \eta) = \rho_u \big( g_{A_0}(v,w)\big) + \rho_\psi \big( g_{A_0}(v,w)\big) - g_{A_0}(\nabla^{LC}_u v, w) -g_{A_0}(v ,\nabla^{LC}_u w)\,. $$
As $\nabla^{LC}$ is metric with respect to $g_{A_0}$, the non-metricity tensor reduces to
$$\big( \nabla^0_{u + \psi}g\big)(v+ \zx, w+ \eta) = \rho_\psi \big( g_{A_0}(v,w)\big)\,,$$
which, in general, is not zero. The proof of Proposition \ref{prop:ExMetricCon} established that we can `correct' $\nabla^0$ by adding a tensor $\Gamma_A$ and render the resulting connection metric compatible.  Carefully evaluating the algebraic constraint on $\Gamma_A$, using the symmetry and the kernel, leads to
\begin{equation}\label{eqn:AlgConSplit}
\rho_\psi\big (g_{A_0}(v,w)\big) =  2\, g_{A_0} \big( \hat{\Gamma}_A(\psi,v), w\big)\,,
\end{equation}
where $\hat{\Gamma}_A(\psi,v)$ is the component of $\Gamma_A(\psi,v)$ in $A_0$. As $g_0$ is nondegenerate, \eqref{eqn:AlgConSplit} uniquely defines a Lie algebroid $(1,2)$-tensor
$$ \Sec(A) \times \Sec(A) \ni (\bar{u}, \bar{v}) \longmapsto \hat{\Gamma}_A\big(\pi_L(\bar{u}),  \pi_{A_0}(\bar{v})\big)\in \Sec(A_0)\,,$$
were $\pi_L : \Sec(A) \mapsto \Sec(L)$ and $\pi_{A_{0}} : \Sec(A) \mapsto \Sec(A_0)$.  We then set all the components of $\Gamma_A$ that have their image in $\Sec(L)$ to be zero, as these components are unconstrained by the algebraic condition; this is a valid choice. The components that have their image in $\Sec(A_0)$ are defined uniquely by the above.  We have thus constructed a minimal Carroll connection on $A$ given by
$$\nabla := \nabla^0 + \Gamma_A\,.$$
\end{example}
%
%
\section{Concluding Remarks}\label{sec:ConRem}

Carrollian Lie algebroids  are a mathematically consistent framework to describe novel Carrollian geometries in which the Carroll vector field may be singular,  e.g.,  Carrollian  analogues of black holes  (see \cite{Ecker:2023}).  Carrollian Lie algebroids push the singular nature of the geometry into the anchor map, while the rest of the structure is geometrically well-behaved. In particular, the degenerate metric and the rank of its kernel remain well-defined. The Carroll distribution is, in general, a singular Stefan--Sussmann distribution, and captures the notion of singular Carroll vector fields.  The associated Carroll foliation describes Carroll causality, i.e., massive particles are confined to move along a fixed leaf of the foliation. In short, we have a robust foundation to work with exotic Carrollian structures. \par 
The existence of compatible connections on a Carrollian Lie algebroid has been established; we refer to these as Carrollian connections. Such connections are essential for defining parallel transport that respects the Carrollian leaves, i.e., distinguishes between the spatial and null directions. Moreover, Carrollian connections are needed for defining consistent matter couplings and Carrollian dynamics, including their Lie algebroid generalisations.  For the case of (pseudo-)Riemannian Lie algebroids, which include the standard case of (pseudo-)Riemannian manifolds, we have a unique connection that is both metric compatible and torsion-free, i.e., the Levi-Civita connection. While Carrollian Lie algebroids always admit Carrollian connections, they generically carry non-zero torsion. Specifically, we have shown that torsion-free Carrollian connections exist if and only if the Carrollian Lie algebroid is stationary, i.e., all sections of the kernel of the metric are Killing sections. \par 
Heuristically, the Killing condition on sections of $L$ ensures the degenerate metric is insensitive to the `null direction'; we effectively have a non-degenerate metric in the transversal directions. This implies the existence of a transversal Levi-Civita connection. The metric does not ``feel'' the components of the full connection in the null directions, allowing the freedom to choose these components to be torsion-free. \par 
Carrollian connections are far from unique. Thus, Carrollian connections, in the intrinsic picture of Carrollian geometry, need to be stated as extra geometric data. A corollary of this existence result is that Carrollian connections can always be constructed on any weak Carrollian manifold; a result undoubtedly known to experts but, to the author’s knowledge, not explicitly stated in the literature before (see Caimbelli \cite{Ciambelli:2024}, Ciambelli \&  Jai-akson \cite{Ciambelli:2025}, and  Vigneron et al. \cite{Vigneron:2025}).

\medskip
Further avenues for future exploration include:\\
\noindent \textbf{Carrollian singularities:} Further physically motivated examples of Carrollian Lie algebroids may help in developing a robust notion of a Carrollian singularity and further give insight into Carrollian gravity. For example, mixed null-space like hypersurfaces (see Subsection \ref{subsec:MixHypSur}), under some assumptions, have been shown to lead to Carrollian Lie algebroids. In particular, relaxing the condition that the screen vector bundle is trivial would cover more physical situations. However, there is a K-theoretical obstruction to the existence of a smooth extension of the screen vector bundle, and this requires careful mathematical analysis.    \\
\noindent \textbf{Galilean Lie algebroids:}  Galilean geometry is the non-relativistic limit ($c \mapsto \infty$) of Lorentzian geometry. Given the duality between Carrollian and Galilean geometry, it is expected that Galilean Lie algebroids can similarly be constructed, (see \cite{Figueroa-O’Farrill:2022,Figueroa-O’Farrill:2023}, for example).\\
\noindent \textbf{Carrollian Lie groupoids:} The global objects associated with Carrollian Lie algebroids are Carrollian Lie groupoids.  The study of the Lie groupoid case remains completely unexplored and potentially offers novel perspectives on  Carrollian Lie groups and related structures (see \cite{Figueroa-O’Farrill:2023}).  Carrollian Lie groups describe homogeneous spaces, while Carrollian Lie groupoids offer insight into  Carrollian symmetry on more general manifolds where the symmetries only exist locally. \\
\noindent \textbf{Holography and AdS/CFT:} Carrollian field theories have been studied as potential boundary theories in the context of holography (see, for example, \cite{Donnay:2022,Donnay:2023}). Speculatively, Carrollian Lie algebroids offer a framework to construct theories with singular Carroll vector fields.\\
\noindent \textbf{Carrollian-Poisson algebroids:} It is well-known that the cotangent bundle of a Poisson manifold is a Lie algebroid. This invites the study of such Lie algebroids equipped with a Carrollian structure. From a physics perspective, a classical limit of quantum gravity could well be a manifold equipped with both a pseudo-Riemannian and Poisson structure. Carrollian-Poisson algebroids fit within this philosophy as a limit of quantum gravity. The interplay of the two degenerate structures  may lead to novel results. For example, are there natural compatibility conditions, and what happens to the Carroll distribution when we have singular Poisson structures? Can Carrollian-Poisson algebroids offer a new perspective on contravariant gravity theories (see \cite{Kaneko:2017})? \\
\noindent \textbf{Condensed matter physics and hydrodynamics:} Applications of Carrollian dynamics in hydrodynamics and condensed matter physics are an emerging field (see \cite[Part III]{Bagchi:2025}). For example, fractons are an emergent quasiparticle with restricted mobility; they can only move in bound pairs. Such quasiparticles can be studied from the perspective of Carrollian physics (see \cite{Figueroa-O’Farrill:2023b}). As Carrollian Lie algebroids may appear as a reduction of invariant Carrollian structures, it is conceivable that they have applications in condensed matter physics and hydrodynamics. For instance, can the semiclassical dynamics of fractions be described using geometric mechanics on a Carrollian Atiyah algebroid? 
%
%
\section*{Acknowledgments}
The author expresses his gratitude to Daniel Grumiller for highlighting the need for singular Carroll vector fields. Appreciation is also extended to the anonymous referees for their valuable comments and suggestions.  
%
%

\end{document}